\def\plist@algorithm{Alg.\space}
\newcommand{\Matlab}{\textsc{Matlab}}
\newcommand{\mlin}[1]{\mbox{\tt{#1}}}
\definecolor{mpiblue}{HTML}{33A5C3}
\colorlet{MPIblue}{mpiblue}
\tikzset{>=latex}
\pgfplotsset{width=7cm,compat=1.8}
\definecolor{mpibluefont}{HTML}{17A1C1}
\colorlet{MPIbluefont}{mpibluefont}
\definecolor{mpigreen}{HTML}{007675}
\colorlet{MPIgreen}{mpigreen}
\definecolor{mpired}{HTML}{78004B}
\colorlet{MPIred}{mpired}
\definecolor{mpisand}{HTML}{ece9d4}
\colorlet{MPIsand}{mpisand}
\crefname{table}{tab.}{tabs.}
\Crefname{table}{Table}{Tables}
\newtheorem{theorem}{Theorem}[section]
\newtheorem{proposition}[theorem]{Proposition}%
\newtheorem{corollary}[theorem]{Corollary}%
\journal{Applied Mathematics and Computation}
\begin{document}

\begin{frontmatter}



\title{Generalized cyclic symmetric decompositions for the matrix multiplication tensor}

\author[label1]{Charlotte Vermeylen}
\affiliation[label1]{organization={Department of Electrical Engineering (ESAT) KU Leuven},
             addressline={Kasteelpark Arenberg 10},
             city={Heverlee},
             postcode={3001},
             state={Vlaams-Brabant},
             country={Belgium}}
             
\author[label2]{Marc Van Barel}
\affiliation[label2]{organization={Department of Computer Science KU Leuven},
             addressline={Celestijnenlaan 200C A},
             city={Heverlee},
             postcode={3001},
             state={Vlaams-Brabant},
             country={Belgium}}


\begin{abstract}
A new generalized cyclic symmetric structure in the factor matrices of polyadic decompositions of matrix multiplication tensors for non-square matrix multiplication is proposed to reduce the number of variables in the optimization problem and in this way improve the convergence. The structure is implemented in an existing numerical optimization algorithm. Extensive numerical experiments are given that the proposed structure indeed finds more (practical) decompositions.
\end{abstract}

%

\begin{keyword}
matrix multiplication \sep tensors \sep optimization \sep canonical polyadic decomposition \sep cyclic symmetry


\end{keyword}

\end{frontmatter}


\section{Introduction} \label{sec:introduction}
Matrix multiplication is a fundamental operation in (numerical) linear algebra and appears in numerous applications ranging from machine learning, signal processing, robotics, etc. The discovery of faster algorithms can therefore have a significant impact. \emph{Fast matrix multiplication (FMM)} has already been investigated for many years since the first FMM algorithm was discovered by Strassen in 1969. However, still many open problems remain. Recently, DeepMind has discovered faster FMM algorithm using deep reinforcement learning \cite{DeepMind_2022} and more recently, an \emph{Augmented Lagrangian (AL)} algorithm was proposed to find more stable and faster FMM algorithms \cite{vermeylen2023stability}. In this work, we build upon the algorithm from \cite{vermeylen2023stability} by incorporating a new structure and show in various numerical experiments that including this structure facilitates the discovery of new \emph{practical} FMM algorithms.

We consider the \emph{fast matrix multiplication (FMM)} problem, which rewrites the bilinear equations of matrix multiplication,
\begin{equation} \label{eq:matrix_mult2}
C(i,j) = \sum_{k=1}^p A(i,k) B(k,n),
\end{equation}
where $C:=AB$, as a tensor equation:
\begin{align} \label{eq:matrix_mult_tensor1}
C &= \sum_{i,j,k=1}^{m,p,n} \big\langle E_{ik}^{m\times p}, A  \big\rangle_\mathrm{F} \big\langle E_{kj}^{p\times n}, B \big\rangle_\mathrm{F} E_{ij}^{m\times n} \nonumber \\
&= \left( \sum_{i,j,k=1}^{m,p,n} E_{ik}^{m\times p} \op E_{kj}^{p\times n} \op E_{ij}^{m\times n} \right)\cdot_\mathrm{F} \left( A, B \right),
\end{align}
where $E_{ij}^{m\times p}$, for $i=1,\dots,m$ and $j=1,\dots, p$, is a basis of matrices in $\mathbb{R}^{m\times p}$ such that $E_{ij}^{m\times p}(i_1,i_2) = 1$ if $(i_1,i_2)=(i,j)$ and zero else, `$\op$' denotes the tensor product and $\langle \cdot \rangle_\mathrm{F}$ and `$\cdot_\mathrm{F}$' denote the Frobenius inner product. A transpose is added to \eqref{eq:matrix_mult_tensor1} such that the tensor that is defined has additional interesting properties, such as \emph{cyclic symmetry (CS)}, which will be discussed in more detail in \Cref{sec:CS}: 
\begin{align} \label{eq:matrix_mult_tensor}
C^\top &= \left( \sum_{i,j,k=1}^{m,p,n} E_{ik}^{m\times p} \op E_{kj}^{p\times n} \op \left(E_{ij}^{m\times n}\right)^\top \right)\cdot_\mathrm{F} \left( A, B \right) \nonumber \\
&= \left( \sum_{i,j,k=1}^{m,p,n} E_{ik}^{m\times p} \op E_{kj}^{p\times n} \op E_{ji}^{n\times m} \right)\cdot_\mathrm{F} \left( A, B \right).
\end{align}
Note that the Frobenius inner product is a linear function in the elements of a matrix. The rank of the bilinear equation is the minimal $r$  such that the equation can be written as
\begin{align} \label{eq:base_alg}
C^\top &=  \sum_{i=1}^{r} \big\langle U_i, A  \big\rangle_\mathrm{F} \big\langle V_i, B \big\rangle_\mathrm{F} W_i,
\end{align}
for some matrices $U_i \in \mathbb{R}^{m \times p}$, $V_i \in \mathbb{R}^{p \times n}$, and $W_i \in \mathbb{R}^{n \times m}$. It can be shown that minimizing $r$ minimizes the computational complexity of matrix multiplication \cite[Proposition 15.1]{burgisser2013algebraic}, \cite{landsberg2011tensors}. More specifically, the number of arithmetic operations needed to compute two matrices in $\mathbb{R}^{n \times n}$ is $\mathcal{O}\left( n^\omega \right)$, where $\omega:= \log_n r$. Since $ r < n^3$, it holds that $\omega < 3$. Additionally, it is well known that the exponent is bounded from below by two, since you need at least $n^2$ operations to compute all $n^2$ elements of the matrix multiplication.

Minimizing the rank of the bilinear equation corresponds to finding a \emph{canonical polyadic decomposition (CPD)} of the \textit{matrix multiplication tensor (MMT)} \cite{Strassen1969} defined implicitly in \eqref{eq:matrix_mult_tensor}:
\begin{align} \label{eq:MMT_triv}
T_{mpn} &:=  \sum_{i,j,k=1}^{m,p,n} e_{ik}^{mp} \op e_{kj}^{pn} \op e_{ji}^{mn},
\end{align}
where $e_{ik}^{mp}:= \mathrm{vec} \left(E_{ik}^{m\times p} \right)$ and $T_{mpn}$ is the MMT of size $mp \times pn \times mn$. 
Because of its definition, the MMT is a tensor consisting of only $mpn$ ones: 
\begin{equation} \label{eq:ones_MMT}
\begin{split}
&T_{mpn}(i_1 + (i_2-1)m,j_1 + (j_2-1)p,k_1 + (k_2-1)n) \\
&= \begin{cases} 1 & \mathrm{ if }~ i_1 = k_2, i_2 = j_1,j_2 = k_1 \\
0 & \mathrm{else}
\end{cases},
\end{split}
\end{equation}
for all $i_1,k_2 = 1, \dots, m$, $i_2,j_1 = 1, \dots, p$, and $j_2, k_1 = 1, \dots, n$.
Note that an MMT only depends of the size of the matrices that are multiplied and not on the values. 

A PD of $T_{mpn}$ of \emph{length} $r$ decomposes $T_{mpn}$ into $r$ rank-1 tensors:
\begin{equation} \label{eq:CPD}
\begin{split}
T_{mpn} = \sum_{i=1}^r u_i \op v_i \op w_i,
\end{split}
\end{equation}
where $u_i,v_i$, and $w_i$ are vectors in $\mathbb{R}^{n_1}$, $\mathbb{R}^{n_2}$, and $\mathbb{R}^{n_3}$ respectively. These vectors can be stored in three so-called \emph{factor matrices} $U$, $V$, and $W$:
$U := \left[ u_{1}, \cdots, u_r \right]$, $V := \left[ v_{1}, \cdots, v_r \right]$, and $W := \left[ w_{1}, \cdots, w_{r} \right]$. 
A PD of length or \emph{rank} $r$ is denoted by $\mathrm{PD}_r$ and a $\mathrm{PD}_r$ of a tensor $T$ as $\mathrm{PD}_r(T)$. The minimal rank for which a PD of a tensor $T$ exists is denoted by the rank $r^*$ of this tensor or $r^*(T)$. The PD is then called canonical (CPD) and denoted by $\mathrm{PD}_{r^*}(T)$. 

A $\mathrm{PD}_r$ \eqref{eq:CPD} of $T_{mpn}$ can be used to obtain a \emph{base algorithm} for the multiplication of two matrices $A \in \mathbb{R}^{m\times p}$ and $B \in \mathbb{R}^{p \times n}$:
\begin{align} \label{eq:vecAB_TM}
\text{vec}\left( C^\top \right) &= \sum_{i=1}^{r} \langle u_i, \text{vec}(A) \rangle \langle v_i,  \text{vec}(B) \rangle w_i.
\end{align}
Note that this is the same equation as \eqref{eq:base_alg} with $U_i = \text{reshape}(u_i,[m,p])$, $V_i = \text{reshape}(v_i,[p,n])$, and $W_i = \text{reshape}(w_i,[n,m])$, for all $i=1, \dots, r$. The number of \emph{active} multiplications, i.e., multiplication between linear combinations of elements of $A$ and $B$ is reduced to $r$. The upper bound $mpn$ holds for standard matrix multiplication. When a base algorithm is applied recursively to multiply large matrices, the active multiplication determine the asymptotic complexity \cite{Landsberg_complexity}.

One of the difficulties is that the rank of a CPD of $T_{mpn}$, denoted by $r^*(T_{mpn})$, is not known for most $m$, $p$, and $n$. One of the counterexamples is $T_{222}$, for which the rank $r^*$ is known to be seven \cite{DeGroote1978b} and the problem is fully understood.  
We call $\mathrm{PD}_{7}(T_{222})$ discovered by Strassen $\mathrm{PD}_{\mathrm{Strassen}}$ \cite{Strassen1969}. For other $m$, $p$, and $n$, only lower and upper bounds on the rank exist. For example, it is known that $r^*(T_{333})$ is higher than or equal to 19 \cite{Blaser2003}, and smaller than or equal to 23 \cite{Laderman1976, Ballard2019, Smirnov2013, new_ways_33_2021}. We denote the upper bound with $\tilde{r}(T_{mpn})$, i.e., the lowest rank for which a PD of $T_{mpn}$ is known. An overview of $\tilde{r}(T_{mpn})$ for different $m$, $p$, and $n$, is for example given in \cite[Table 1]{Smirnov2013} and \cite[Fig. 1]{DeepMind_2022}. Note that the rank of a MMT is invariant under permutation of $m$, $p$, and $n$.

To find a $\mathrm{PD}_r$ of $T_{mpn}$, we use the following \emph{nonlinear least squares (NLS)} cost function, which is also frequently used in the literature for the FMM problem \cite{Smirnov2013,Tichavsky2017,Ballard2019,Chiantini2019}:
\begin{equation} \label{eq: unconstr_LSQ_min}
\min_x  \underbrace{\frac{1}{2} \left\lVert  F(x)- \text{vec} \left(T_{mpn} \right) \right\rVert^2}_{=: f(x)},
\end{equation}
where $\lVert \cdot \rVert $ denotes the {$\ell_2$}-norm, and $F(x)$ is defined as
\begin{align} \label{eq:F(x)}
F\left(x \right) &:=  \mathrm{vec} \left( \sum_{r=1}^r u_r \op v_r \op w_r \right), & x &:= \begin{bmatrix}
\mathrm{vec}(U)\\ \mathrm{vec}(V)\\ \mathrm{vec}(W)  
\end{bmatrix}.
\end{align}
Note that $f(x^*)=0$, if and only if $x^*$ is a $\mathrm{PD}_r$ of $T_{mpn}$. However, most standard optimization algorithms fail to converge to a global optimum of \eqref{eq: unconstr_LSQ_min}. One of the reasons is that PDs of MMT have more invariances than generic PDs, which we will briefly review in \Cref{sec:inv_transf} \cite{DeGroote1978a}. This means that the minima of \eqref{eq: unconstr_LSQ_min} are non-isolated.

In practice, the minimal length or rank ${r}^*$ is approximated experimentally by the lowest length $\tilde{r}$ for which a solution $x^*$ with $f(x^*)=0$ can be obtained using numerical optimization. However, no globally convergent algorithm for \eqref{eq: unconstr_LSQ_min} is known to us, and thus we can never be sure that no solution with a lower rank exists. 

Another difficulty when solving \eqref{eq: unconstr_LSQ_min} is that number of parameters or unknowns grows rapidly with $m$, $p$, and $n$, Consequently, solving \eqref{eq: unconstr_LSQ_min} is only feasible for small $m$, $p$, and $n$, e.g., $m,p,n \leq 4$.
Therefore, in this paper a new structure is proposed that can be enforced in PDs of MMTs to reduce the number of variables and hereby speed-up the convergence.

To solve \eqref{eq: unconstr_LSQ_min}, in the literature the alternating least squares (ALS) method is frequently used \cite{Smirnov2013,params_R23_1986,Chiantini2019,Ballard2019} but the convergence is usually slow and a lot of starting points are needed to obtain a solution in a reasonable amount of time. In \cite{Tichavsky2017} and \cite{Smirnov2013}, a constrained optimization problem and corresponding method are proposed to improve the convergence. More specifically, a Levenberg-Marquardt (LM) method with Lagrange parameters and a quadratic penalty (QP) term in combination with the ALS method are used respectively. However, both methods were still not able to find PDs of rank 49 of MMTs for $4 \times 4$ matrix multiplication whereas these PDs are known to exist. That is why we proposed in \cite{vermeylen2023stability} an augmented Lagrangian (AL) method and a new constrained optimization problem to find PDs of MMTs:
\begin{equation}\label{eq:constr_min}
\min_x f(x), \quad \mathrm{s.t.} \quad h(x)=0, \quad l \leq x \leq u.
\end{equation}
Different equality constraints $h(x)$ were proposed.
The AL method can be considered as a combination of the LM and QP method since the AL objective function is:
\begin{equation*} 
\begin{split}
\min_{x,y_1,y_2,z}  \underbrace{f(x) + \left\langle y_1,h(x) \right\rangle + \langle y_2, x - z \rangle + \frac{\beta}{2} \left( \lVert h(x) \rVert^2 + \lVert x- z\rVert^2 \right)}_{=:\mathcal{L}_A \left( x,y_1, y_2,z;\beta \right)},
\end{split}
\end{equation*}
subject to (s.t.) $l \leq z \leq u$, where $y_1$ and $y_2$ are vectors of Lagrange multipliers, $\beta$ is a regularization parameter, and $z$ is a vector of slack variables. Because $f(x)$ is an NLS function, the AL objective can also be rewritten as an NLS function. The variables in $x$ and the auxiliary parameters are updated successively according to \cite[Section 17.4]{Nocedal12006}. We use the LM method for the minimization to $x$. The pseudo-code of the algorithm is shown in \cite[Algorithm 4.2]{vermeylen2023stability} and the {\Matlab} implementation is publicly available \footnote{\url{https://github.com/CharlotteVermeylen/ALM_FMM_stability}}.
The advantage of the AL method is that the constraints are satisfied accurately, even when we are not in the neighborhood of an optimum. This method was used to obtain new $\mathrm{PD}_{49} \left(T_{444} \right)$s and different new CS PDs. This method is used as well in the numerical experiments in \Cref{sec:num_exp}.

The rest of the paper is organized as follows. \Cref{sec:prelim} gives some preliminaries concerning the FMM problem. In \Cref{sec:gen_CS}, the new generalized CS structure is proposed, and in \Cref{sec:num_exp}, numerical experiments are given to demonstrate the use and advantages of including this structure in the optimization problem \eqref{eq: unconstr_LSQ_min}.

\section{Preliminaries} \label{sec:prelim}

This section first discusses what is meant with practical PDs and FMM algorithms. Afterwards, we give some well known properties of MMTs.

\subsection{Practical algorithms}
As discussed in the introduction, a base algorithm corresponding to a $\mathrm{PD}_r(T_{mpn})$ can be applied recursively to multiply larger matrices, e.g., of size $m^k \times p^k$ and $p^k \times n^k$. Such an algorithm requires $\mathcal{O} \left( c r^k\right)$ floating point operations, where the constant $c$ depends on the number of elements in the factor matrices. When $c$ is sufficiently small and $k$ sufficiently large, fewer operations are required compared to standard matrix multiplication, i.e., $\mathcal{O}\left((mpn)^k\right)$. To decrease the constant $c$, sparse PDs are desired with elements that are powers of 2, because multiplication with a power of 2 is not a costly operation when implemented in hardware. These PDs are called \emph{practical PDs} because they speed up standard matrix multiplication for large matrices and consequently are useful in various practical applications. 

Consequently, if a numerical PD of $T_{mpn}$ is obtained, it has to be transformed to obtain a practical algorithm. The \emph{invariance transformations} or \emph{inv-transformations} \cite{DeGroote1978a}, which are summarized in \Cref{sec:inv_transf}, can be used for this purpose \cite{Tichavsky2017}. However, not all numerical PDs of $T_{mpn}$ can be transformed in this way \cite{berger2022equivalent}. In \cite{berger2022equivalent}, this process is called \emph{discretization}.

A disadvantage of \eqref{eq: unconstr_LSQ_min} is that the solutions have floating point elements and thus extra steps have to be taken or constraints have to be added to the optimization problem to obtain practical PDs. That is why we proposed a new constrained problem formulation in \cite{vermeylen2023stability} with equality constraint:
\begin{equation} \label{eq:h_discr}
h_{\mathrm{discr}} (x):= x\cdot (x-1) \cdot (x+1),
\end{equation}
where `$\cdot$' denotes element-wise multiplication.

\subsection{Properties of the matrix multiplication tensor}

In this section, first the invariances that are present for PDs of $T_{mpn}$ are discussed. A similar overview already appeared in, e.g., \cite{vermeylen2023stability}.
Afterwards, cyclic symmetry and recursive PDs are discussed in more detail. 

\subsubsection{Invariance transformations} \label{sec:inv_transf}

The parametrization of a PD using factor matrices is well known to be not unique. For example, the same tensor is obtained when permuting the rank-1 tensors, and consequently the factor vectors.
Additionally, because of the multi-linearity of the tensor product, the columns can be scaled as: $\alpha_i u_i, \beta_i v_i, \frac{1}{\alpha_i \beta_i} w_i$, for all $i=1, \dots, r$, and $\alpha_i,\beta_i$ in $\mathbb{R}_0$. 
Note that only the scaling invariance is a continuous invariance transformation (of dimension $2r$).

PDs of $T_{mpn}$ also have additional invariances \cite{DeGroote1978a}. For example, it is well known that they are invariant under the following \emph{PQR-transformation}:
\begin{equation} \label{eq:mult_inv}
\begin{split}
u_i' \leftarrow \text{vec} \left( P U_i Q^{-1} \right), \quad v_i' \leftarrow \text{vec} \left( Q V_i R^{-1} \right),  \quad w_i' \leftarrow \text{vec} \left( R W_i P^{-1} \right), \\
\end{split}
\end{equation}
for $i=1, \dots, r$, and where $P \in GL(m)$, $Q \in GL(p)$, and $R \in GL(n)$, where $GL(i)$ represents the \emph{general linear group} of invertible matrices in $\mathbb{R}^{i \times i}$. 
This invariance can easily be proven using the equality: $C := AB = \left(P^\top \right)^{-1} \left( P^\top A \left(Q^\top \right)^{-1} \right) \left( Q^\top B \left(R^\top \right)^{-1} \right) R^\top$. When we substitute this in \eqref{eq:vecAB_TM}, we obtain:
\begin{equation*}
\begin{split}
C = \sum_{i=1}^r &\text{trace} \left( U_i \left(P^\top A \left(Q^\top \right)^{-1} \right)^\top \right) \\
&\text{trace} \left( V_i \left( Q^\top B \left(R^\top \right)^{-1} \right)^\top \right) \left(P^\top \right)^{-1} W_i^\top R^\top\\
= \sum_{i=1}^r &\text{trace} \left( P U_i Q^{-1} A^\top \right) \text{trace} \left( Q V_i R^{-1} B^\top \right) \left( R W_i P^{-1} \right)^\top,
\end{split}
\end{equation*}
where we made use of the fact that the trace is invariant under cyclic permutation of the factors. And thus indeed another base algorithm or $\mathrm{PD}_r$ of $T_{mpn}$ can be obtained using \eqref{eq:mult_inv}. 
Additionally, when $m=n=p$, PDs of $T_{mmm}$ are invariant under the following \emph{transpose-transformation}:
\begin{equation*}
\begin{split}
u_i' \leftarrow \text{vec} \left( V_i^\top \right) , \quad v_i' \leftarrow \text{vec} \left( U_i^\top \right) , \quad w_i' \leftarrow \text{vec} \left( W_i^\top \right),\\
\end{split}
\end{equation*}
for $i=1, \dots, r$, which can be proven using the fact that $C = \left( B^\top A^\top \right)^\top$. 
Lastly, when $m=n=p$, the PDs are invariant under cyclic permutation of the factor matrices.

The PQR-invariance is a continuous invariance and it overlaps with the scaling invariance when $P=aI$, $Q=bI$, and $R=cI$, where $I$ is the identity matrix of the appropriate size and $a,b$, and $c$ are scaling factors in $\mathbb{R}_0$. Thus, the combination of the scaling and PQR-transformation has at most dimension $2r + m^2 + p^2 + n^2 - 3$ for all PDs of $T_{mpn}$.


The combination of the invariance transformations described above is called an \emph{inv-transformation}. Two PDs of $T_{mpn}$ that can be obtained using inv-transformations are called \emph{inv-equivalent}. Only for $T_{222}$, it is known that all PDs are inv-equivalent \cite{DeGroote1978b}.

\subsection{Jacobian matrix}

In \cite{vermeylen2023stability}, it is shown that the Jacobian matrix can be used to investigate the inv-equivalence of PDs of MMTs and to investigate any additional invariances, such as the ones discovered in \cite{vermeylen2023stability} using parametrizations of decompositions. That is why in \Cref{sec:num_exp}, the size and ranks of the Jacobian matrix at solutions with the proposed structure, are given. 

\subsubsection{Cyclic symmetry} \label{sec:CS}

Because of \eqref{eq:MMT_triv}, $T_{mpn}$ is a structured tensor. More specifically, when $m=p=n$, $T_{mpn}$ is a so-called cyclic symmetric (CS) tensor, which means that $T_{mmm}(i,j,k) = T_{mmm}(j,k,i)=T_{mmm}(k,i,j)$, for all $i,j,k=1,\dots, m^2$. 
This structure can be used in the parameterization of a PD. More specifically,  it can be enforced that the rank-1 tensors are symmetric or occur in CS pairs \cite{ballard2017_CS}:
\begin{equation*}
T_{mmm} = \sum_{i=1}^s a_i \op a_i \op a_i +\sum_{j=1}^t \left( b_j \op d_j \op c_j + c_j \op b_j \op d_j + d_j \op c_j \op b_j \right),
\end{equation*}
where $a_i,b_j,c_j$, and $d_j$ are vectors in $\mathbb{R}^{m^2}$, for all $i =1,\dots, s$ and $j=1,\dots, t$. Here, $s$ and $t$ are parameters denoting the number of symmetric and CS pairs of asymmetric rank-1 tensors, respectively. The length of a CS PD equals $r=s+3t$. The matrices
$A := \left[ a_{1} \cdots a_{s} \right]$, 
$B := \left[ b_{1} \cdots b_{t} \right]$, $C := \left[ c_{1} \cdots c_{t} \right]$, and $D := \left[ d_{1} \cdots d_{t} \right]$, can be defined thus that the factor matrices can be written as: $U= \begin{bmatrix} A & B &  C & D \end{bmatrix}$, $V= \begin{bmatrix} A & D & B & C \end{bmatrix}$, and $W = \begin{bmatrix} A & C & D & B \end{bmatrix}$.
The number of unknowns is reduced by a factor 3, which makes this CS structure very useful for larger $m$. Furthermore, the cost function can be changed to
\begin{align} \label{eq:obj_CS}
\min_{A,B,C,D} \frac{1}{2} \Bigg( &\sum_{i=1}^{m^2} \sum_{j=i}^{m^2} \sum_{k=i+1}^{m^2} \bigg( T_{mmm}(i,j,k) - \sum_{i'=1}^s a_{ii'} a_{ji'} a_{ki'} \nonumber \\
- &\sum_{j'=1}^t \big( b_{ij'} d_{jj'} c_{kj'} + c_{ij'} b_{jj'} d_{kj'} + d_{ij'} c_{jj'} b_{kj'} \big) \bigg)^2 \\
+ &\sum_{i=1}^{m^2} \bigg( T_{mmm}(i,i,i) - \sum_{i'=1}^s a_{ii'}^3 - 3 \sum_{j'=1}^t \big( b_{ij'} d_{ij'} c_{ij'} \big) \bigg)^2 \Bigg), \nonumber
\end{align}
where $a_{ii'}:=A(i,i')$, $b_{ii'}:=B(i,i')$ and similarly for $c_{ii'}$ and $d_{ii'}$, which reduces the number of rows in the Jacobian matrix used in NLS optimization methods to solve \eqref{eq:obj_CS} from $m^6$ to $\frac{1}{3} (m^6 - m^2) + m^2$. Note that this cost function only includes one element for each CS pair of points because the CS structure already ensures that these elements are equal.
A disadvantage of including this structure may be that by restricting the search space we might not be able to find the most sparse or stable PDs of $T_{mmm}$. Furthermore, it is not known if the rank of a PD of $T_{mmm}$ with this structure equals the canonical rank.

Different practical CS PDs of rank 7 for $T_{222}$ and of rank 23 for $T_{333}$ are known in the literature \cite{Chiantini2019,Ballard2019}. These PDs were found using an ALS method and further investigated using algebraic geometry and group theory. Strassen's decomposition satisfies $s=1$ and $t=2$.
Although all $\mathrm{PD}_7\left( T_{222} \right)$s are known to be inv-equivalent, $\mathrm{PD}_{\mathrm{Strassen}}$ can be transformed into a practical PD with CS parameters $(s,t)=(4,1)$. However, this PD contains more nonzeros than $\mathrm{PD}_{\mathrm{Strassen}}$ and thus is not used in practice. 

\subsection{Decompositions obtained by recursion}

As mentioned in the introduction, an FMM algorithm applies a base algorithm for small $m$, $p$, and $n$, to large matrices, e.g., of size $m^k \times p^k$ and $p^k \times n^k$. In the same way, we can also obtain PDs of rank 49 of $T_{444}$ using a $\mathrm{PD}_7\left(T_{222} \right)$ one time recursively. The $\mathrm{PD}_{49}\left(T_{444} \right)$ that is obtained using $\mathrm{PD}_{\mathrm{Strassen}}$ is called $\mathrm{PD}_{\mathrm{Strassen}}^{\mathrm{rec}}$ in the rest of the text. In the following proposition, the formulas to obtain the factor matrices of a recursive PD from the factor matrices of the original PD are given. This result is well known, e.g., from the supplementary material of \cite{DeepMind_2022}, and in \cite{fam_FMM_2017}, and a proof can be found, e.g., in \cite[Section 2.2.3]{thesis}.

\begin{proposition}[Recursive PD] \label{prop:Strassen_rec}
If $U$, $V$, and $W$ are the factor matrices of a $\mathrm{PD}_r(T_{mpn})$, then the factor matrices $U', V'$, and $W'$, where
\begin{align} \label{eq:factor_matrices_rec}
u'_{i'} &:= \mathrm{vec} \left( U_{i_1} \otimes U_{i_2} \right), & v'_{i'} &:= \mathrm{vec} \left( V_{i_1} \otimes V_{i_2} \right), \\
w'_{i'} &:= \mathrm{vec} \left( W_{i_1} \otimes W_{i_2} \right), & i'&:=i_2+(i_1-1)r,
\end{align}
where `$\otimes$' denotes the Kronecker product, for $i_1,i_2 = 1, \dots, r$, are the factor matrices of a $\mathrm{PD}_{r^2}\left(T_{m^2p^2n^2} \right)$.
\end{proposition}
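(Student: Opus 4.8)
The plan is to sidestep manipulating the $r^2$ rank-one terms of $T_{m^2p^2n^2}$ directly, and instead to use the equivalence recorded in \eqref{eq:base_alg}--\eqref{eq:vecAB_TM} between a $\mathrm{PD}_r$ of a matrix multiplication tensor and a bilinear algorithm for the corresponding product. Concretely, $(U,V,W)$ is a $\mathrm{PD}_r(T_{mpn})$ if and only if
\begin{equation*}
(AB)^\top = \sum_{i=1}^{r} \langle U_i, A\rangle_\mathrm{F}\,\langle V_i, B\rangle_\mathrm{F}\, W_i \qquad \text{for all } A\in\mathbb{R}^{m\times p},\ B\in\mathbb{R}^{p\times n}.
\end{equation*}
Applying the same equivalence at the larger sizes $m^2,p^2,n^2$, it suffices to show that the matrices $U'_{i'}=\mathrm{reshape}(u'_{i'},[m^2,p^2])=U_{i_1}\otimes U_{i_2}$, and likewise $V'_{i'}=V_{i_1}\otimes V_{i_2}$ and $W'_{i'}=W_{i_1}\otimes W_{i_2}$, satisfy the analogous identity for every $\mathcal{A}\in\mathbb{R}^{m^2\times p^2}$ and $\mathcal{B}\in\mathbb{R}^{p^2\times n^2}$. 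A first, purely bookkeeping, check is that $\mathrm{reshape}(\mathrm{vec}(U_{i_1}\otimes U_{i_2}),[m^2,p^2])$ really returns $U_{i_1}\otimes U_{i_2}$ (and similarly for $V$ and $W$), so that the factor matrices of \eqref{eq:factor_matrices_rec} carry the intended Kronecker block structure and the sizes of all factors match.

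Next I would reduce the target identity to rank-one Kronecker inputs. Both sides are bilinear in $(\mathcal{A},\mathcal{B})$, and matrices of the form $A_1\otimes A_2$ with $A_1,A_2\in\mathbb{R}^{m\times p}$ span $\mathbb{R}^{m^2\times p^2}$ (likewise $B_1\otimes B_2$ span $\mathbb{R}^{p^2\times n^2}$), so it is enough to verify equality for $\mathcal{A}=A_1\otimes A_2$ and $\mathcal{B}=B_1\otimes B_2$. For such inputs the Frobenius inner product factors, $\langle U_{i_1}\otimes U_{i_2},\,A_1\otimes A_2\rangle_\mathrm{F}=\langle U_{i_1},A_1\rangle_\mathrm{F}\langle U_{i_2},A_2\rangle_\mathrm{F}$ and similarly for $V$, so the right-hand side splits into a Kronecker product of two length-$r$ sums:
\begin{equation*}
\Big(\sum_{i_1=1}^{r}\langle U_{i_1},A_1\rangle_\mathrm{F}\langle V_{i_1},B_1\rangle_\mathrm{F}W_{i_1}\Big)\otimes\Big(\sum_{i_2=1}^{r}\langle U_{i_2},A_2\rangle_\mathrm{F}\langle V_{i_2},B_2\rangle_\mathrm{F}W_{i_2}\Big).
\end{equation*}
By the displayed identity for $(U,V,W)$ this equals $(A_1B_1)^\top\otimes(A_2B_2)^\top$, and the Kronecker relations $(P\otimes Q)^\top=P^\top\otimes Q^\top$ and the mixed-product property $(A_1B_1)\otimes(A_2B_2)=(A_1\otimes A_2)(B_1\otimes B_2)=\mathcal{A}\mathcal{B}$ turn this into $(\mathcal{A}\mathcal{B})^\top$, as required. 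Finally, the flattening $i'=i_2+(i_1-1)r$ merely relabels the double sum $\sum_{i_1,i_2=1}^{r}$ as a single sum over $i'=1,\dots,r^2$; since a PD is unaffected by reordering its rank-one terms, this yields the factor matrices of a $\mathrm{PD}_{r^2}(T_{m^2p^2n^2})$.

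I do not expect a genuinely hard step here; the only delicate point is keeping the three $\mathrm{vec}/\mathrm{reshape}$ conventions and the transposes consistent, so that the factorization of $\langle\,\cdot\,,\cdot\,\rangle_\mathrm{F}$ over Kronecker products and the identification $W_{i_1}\otimes W_{i_2}=\mathrm{reshape}(w'_{i'},[n^2,m^2])$ hold with the exact conventions of \eqref{eq:factor_matrices_rec}. An equivalent, purely tensor-algebraic route would be to establish the multiplicativity $T_{mpn}\otimes T_{mpn}\cong T_{m^2p^2n^2}$ under the three-mode Kronecker product (the reindexing sending $\mathrm{vec}(X_1)\otimes\mathrm{vec}(X_2)$ to $\mathrm{vec}(X_1\otimes X_2)$ in each mode) directly from the entrywise description \eqref{eq:ones_MMT}, and then observe that the mode-wise Kronecker product of the rank-one terms of \eqref{eq:CPD} with themselves produces exactly the claimed length-$r^2$ decomposition; I would nonetheless favour the bilinear-algorithm version above as the cleaner write-up.
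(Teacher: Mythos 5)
Your argument is correct and complete: reducing to the bilinear-algorithm formulation of \eqref{eq:base_alg}--\eqref{eq:vecAB_TM}, testing on the spanning set $\mathcal{A}=A_1\otimes A_2$, $\mathcal{B}=B_1\otimes B_2$, factoring the Frobenius inner products over Kronecker factors, and closing with $(A_1B_1)\otimes(A_2B_2)=(A_1\otimes A_2)(B_1\otimes B_2)$ is exactly the standard route. The paper itself does not prove \Cref{prop:Strassen_rec} but defers to the cited references (the supplementary material of the DeepMind paper and Section 2.2.3 of the thesis), where the argument is essentially the one you give; your write-up is a valid self-contained substitute, and your flagged bookkeeping points (consistency of the $\mathrm{vec}$/$\mathrm{reshape}$ conventions and the relabelling $i'=i_2+(i_1-1)r$) are indeed the only places requiring care.
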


The following corollary gives the CS parameters and factor matrices of a recursive PD as a function of the original PD. 
 
\begin{corollary} \label{cor:CS_rec} If $U$, $V$, and $W$ are CS factor matrices of a $\mathrm{PD}_r\left( T_{mpn} \right)$ with CS parameters $s$ and $t$, then the factor matrices $U'$, $V'$, and $W'$ of the recursive decomposition $\mathrm{PD}_{r^2}^{\mathrm{rec}}\left( T_{m^2p^2n^2} \right)$ is also CS with parameters $s'=s^2$ and $t'=t(s+r)$: 
\begin{align*}
U' &:= \begin{bmatrix} A'~ B'~ C'~ D' \end{bmatrix}, & V' &:= \begin{bmatrix} A'~ D' ~B'~C' \end{bmatrix}, & W' &:= \begin{bmatrix} A'~C'~D'~B' \end{bmatrix},
\end{align*}
where 
\begin{align} \label{eq:CS_rec_A}
A'_{i'} &:= A_{i_1} \otimes A_{i_2}, & i' &:= i_2 + (i_1-1)s, & i_1,i_2 &= 1, \dots, s,
\end{align}
and 
\begin{align} \label{eq:CS_rec_1}
B' &:= \begin{bmatrix} B_1' & B_2' \end{bmatrix}, & C' &:= \begin{bmatrix}
C_1' & C_2' \end{bmatrix}, & D' &:= \begin{bmatrix} D_1' & D_2' \end{bmatrix},
\end{align}
where 
\begin{align} \label{eq:CS_rec_2}
B'_{1,j'} &:= A_{i_1} \otimes B_{j}, & C'_{1,j'} &:= A_{i_1} \otimes C_{j}, & D'_{1,j'} &:= A_{i_1} \otimes D_{j}, \\
B'_{2,k'} &:= B_{j} \otimes U_{k}, & C'_{2,k'} &:= C_{j} \otimes W_{k}, & D'_{2,k'} &:= D_{j} \otimes V_{k}, \nonumber
\end{align}
where $j' := j + (i_1-1)t$ and $k':=k + (j-1)r$, for $j = 1, \dots, t$ and $k = 1 ,\dots, r$. 
\end{corollary}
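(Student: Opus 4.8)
The plan is to encode the cyclic symmetry of the original decomposition as a single permutation of its columns, and then to show that this permutation lifts to the index pairs of the recursive decomposition of \Cref{prop:Strassen_rec}. Concretely, the block form $U=[A\ B\ C\ D]$, $V=[A\ D\ B\ C]$, $W=[A\ C\ D\ B]$ means exactly that there is a permutation $\pi$ of $\{1,\dots,r\}$ with $\pi^3=\mathrm{id}$ such that $U_{\pi(i)}=W_i$, $V_{\pi(i)}=U_i$, and $W_{\pi(i)}=V_i$ for all $i$; that is, applying the cyclic shift $(U_i,V_i,W_i)\mapsto(W_i,U_i,V_i)$ to the $i$-th rank-one term reproduces the $\pi(i)$-th term. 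Reading off the four blocks shows that $\pi$ fixes the $s$ columns of the $A$-block and acts as the $3$-cycle $\beta_j\to\gamma_j\to\delta_j\to\beta_j$ on the $j$-th triple, where $\beta_j,\gamma_j,\delta_j$ denote the indices of the $j$-th columns of the $B$-, $C$-, and $D$-blocks. First I would verify this correspondence directly from the definitions of $U$, $V$, $W$.

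Next I would define $\pi':=\pi\times\pi$ on the index set $\{1,\dots,r\}^2$ of the recursive terms, identified with $i'=i_2+(i_1-1)r$ as in \eqref{eq:factor_matrices_rec}. Since the Kronecker product and $\mathrm{vec}$ act on the two factors separately, the relations above lift verbatim: $U'_{\pi'(i_1,i_2)}=\mathrm{vec}(U_{\pi(i_1)}\otimes U_{\pi(i_2)})=\mathrm{vec}(W_{i_1}\otimes W_{i_2})=W'_{(i_1,i_2)}$, and likewise $V'_{\pi'}=U'$ and $W'_{\pi'}=V'$. This already shows the recursive decomposition is cyclic symmetric, once its columns are reordered (a permutation of rank-one terms, which is an allowed invariance) so that one representative of each $\pi'$-orbit comes first. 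The symmetric terms are the fixed points of $\pi'$, i.e. the pairs $(i_1,i_2)$ with both indices in the $A$-block; there are $s^2$ of them, giving $s'=s^2$ and $A'_{i'}=A_{i_1}\otimes A_{i_2}$ as in \eqref{eq:CS_rec_A}.

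It then remains to count and describe the asymmetric triples. Because every $\pi$-orbit has length $1$ or $3$, every non-fixed $\pi'$-orbit has length $3$, so the number of triples is $t'=(r^2-s^2)/3$; substituting $r=s+3t$ gives $(r-s)(r+s)/3=t(s+r)$, matching the claim. To read off the blocks I would fix a canonical representative of each non-fixed orbit and assign its $u'$-, $v'$-, $w'$-vectors to the corresponding columns of $B'$, $D'$, $C'$ (dictated by the block pattern of $U'$, $V'$, $W'$). Since $\pi$ preserves the partition into the $A$-block and the $3$-cycles, each non-fixed orbit has its first index either always in the $A$-block or always in a $3$-cycle. In the first case the orbit contains a unique pair $(i_1,\beta_j)$ whose second index lies in the $B$-block; its factors give $A_{i_1}\otimes B_j$, $A_{i_1}\otimes D_j$, $A_{i_1}\otimes C_j$, i.e. the columns of $B'_1,D'_1,C'_1$ in \eqref{eq:CS_rec_2}. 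In the second case the orbit contains a unique pair $(\beta_j,k)$ whose first index lies in the $B$-block, with $k$ arbitrary; its factors give $B_j\otimes U_k$, $D_j\otimes V_k$, $C_j\otimes W_k$, i.e. the columns of $B'_2,D'_2,C'_2$. The main obstacle is the bookkeeping: checking that these two families are disjoint, that they exhaust every orbit exactly once, and that the index maps $j'=j+(i_1-1)t$ and $k'=k+(j-1)r$ enumerate $st$ and $tr$ triples respectively, so that $t'=st+tr=t(s+r)$ as required.
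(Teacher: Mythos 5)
Your proposal is correct and takes essentially the same route as the paper: apply \Cref{prop:Strassen_rec}, observe that a symmetric term forces $U_{i_1}=V_{i_1}=W_{i_1}$ and $U_{i_2}=V_{i_2}=W_{i_2}$ so that both indices lie in the $A$-block (giving $s'=s^2$), and then regroup the remaining Kronecker-product columns into CS triples as in \eqref{eq:CS_rec_2}. The only difference is one of rigor: the paper asserts the regrouping in one line and defers the explicit column bookkeeping to a thesis appendix, whereas your order-three permutation $\pi$ lifted to $\pi\times\pi$, together with the orbit count $t'=(r^2-s^2)/3=t(s+r)$ and the two families of orbit representatives $(i_1,\beta_j)$ and $(\beta_j,k)$, supplies exactly that bookkeeping in a self-contained way.
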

\begin{proof}
Using \Cref{prop:Strassen_rec}, we know that $U_{i'}' = U_{i_1} \otimes U_{i_2}$, $V_{i'}' = V_{i_1} \otimes V_{i_2}$, and $W_{i'}' = W_{i_1} \otimes W_{i_2}$, where $i':= i_2 + (i_1-1)r$. Thus, the symmetric part of the recursive PD must satisfy: $U_{i_1} = V_{i_1} = W_{i_1}$ and $U_{i_2} = V_{i_2}  = W_{i_2}$. Consequently, $i_1$ and $i_2$ must be smaller than or equal to $s$, such that $U_{i_1} = V_{i_1} = W_{i_1} = A_{i_1}$ and $U_{i_2} = V_{i_2}  = W_{i_2} = A_{i_2}$. 
For the other values of $i_1$ and $i_2$, the columns still appear in CS pairs and they can be rearranged as in \eqref{eq:CS_rec_2} to satisfy the CS structure. In \cite[Appendix A]{thesis}, the different columns that are obtained by using \Cref{prop:Strassen_rec} are written out to see this more clearly. 
\end{proof}

Note that, since $\mathrm{PD}_{\mathrm{Strassen}}$ satisfies $s=1$ and $t=2$, the parameters of $\mathrm{PD}_{\mathrm{Strassen}}^{\mathrm{rec}}$ are $s=1$ and $t=16$. Another CS $\mathrm{PD}_{49} \left( T_{444} \right)$ can be obtained with $(s,t) = (16,11)$ by using a $\mathrm{PD}_{7} \left( T_{222} \right)$ with $(s,t) = (4,1)$ one time recursively.

\section{Generalization cyclic symmetry} \label{sec:gen_CS}

In this section, a generalization of the CS structure discussed in \Cref{sec:CS}, to the case when $m$, $n$, and $p$ are not equal, is proposed. We do this to reduce the number of variables and reduce the search space to hopefully speed up the convergence. Note however that similarly as for the CS structure discussed in \Cref{sec:CS}, it is not known if the rank of PDs with this structure equals the canonical rank. Because a PD of $T_{pnm}$ or another permutation of $m$, $p$, and $n$, can be obtained from a PD of $T_{mpn}$, we assume in this chapter that $m \leq p \leq n$. It is known from \eqref{eq:ones_MMT} that:
\begin{equation*}
\begin{split}
&T_{mpn}(i_1 + (i_2-1)m,j_1 + (j_2-1)p,k_1 + (k_2-1)n) \\
&= \sum_{i=1}^r U_i(i_1,i_2) V_i(j_1,j_2) W_i(k_1,k_2) = \begin{cases} 1 & \mathrm{ if }~ i_1 = k_2, i_2 = j_1,j_2 = k_1 \\
0 & \mathrm{else}
\end{cases},
\end{split}
\end{equation*}
for all $i_1,k_2=1,\dots, m$, $i_2,j_1=1,\dots, p$, and $j_2,k_1=1,\dots, n$. 

Consequently, as long as $i_1,i_2,j_1,j_2,k_1,k_2 \leq m$, it holds that
\begin{equation*}
\begin{split}
&T_{mpn}(i_1 + (i_2-1)m,j_1 + (j_2-1)p,k_1 + (k_2-1)n) \\
=~ &T_{mpn}(k_1 + (k_2-1)m,i_1 + (i_2-1)p,j_1 + (j_2-1)n) \\
=~ &T_{mpn}(j_1 + (j_2-1)m,k_1 + (k_2-1)p,k_1 + (k_2-1)n).
\end{split}
\end{equation*}
And thus,
\begin{equation*}
\begin{split}
\sum_{i=1}^r U_i(i_1,i_2) V_i(j_1,j_2) W_i(k_1,k_2) &= \sum_{i=1}^r U_i(k_1,k_2) V_i(i_1,i_2) W_i(j_1,j_2)\\
&= \sum_{i=1}^r U_i(j_1,j_2) V_i(k_1,k_2) W_i(i_1,i_2),
\end{split}
\end{equation*}
which can be used to include a CS structure in a subpart of the factor matrices. An illustration is shown in \Cref{fig:tikz_gen_CS}, where the following submatrices are defined in function of the position in the factor matrices:
\begin{align*}
U_i(i_1,i_2) =: \begin{cases}
A_i(i_1,i_2) & \text{if}~ i_2 \leq m, i \leq s\\
B_{i-s}(i_1,i_2) & \text{if}~ i_2 \leq m, s < i \leq s+t\\
C_{i-s-t}(i_1,i_2) & \text{if}~ i_2 \leq m, s+t < i \leq s+2t\\
D_{i-s-2t}(i_1,i_2) & \text{if}~ i_2 \leq m, s+2t < i \leq r_{\mathrm{CS}}\\
\tilde{U}_i(i_1,i_2-m) & \text{if}~ i_2 > m,  i \leq r_{\mathrm{CS}} \\
\dot{U}_{i-r_{\mathrm{CS}}}(i_1,i_2) & \text{if}~ r_{\mathrm{CS}} < i \\
\end{cases},
\end{align*}
where $r_{\mathrm{CS}}:=s +3t$, and for $i=1,\dots, r$, $i_1=1,\dots, m$, and $i_2 = 1,\dots, p$, 
\begin{align*}
V_i(j_1,j_2) =: \begin{cases}
A_i(j_1,j_2) & \text{if}~ j_1,j_2 \leq m, i \leq s\\
D_{i-s}(j_1,j_2) & \text{if}~ j_1,j_2 \leq m, s < i \leq s+t\\
B_{i-s-t}(j_1,j_2) & \text{if}~ j_1,j_2 \leq m, s+t < i \leq s+2t\\
C_{i-s-2t}(j_1,j_2) & \text{if}~ j_1,j_2 \leq m, s+2t < i \leq r_{\mathrm{CS}}\\
\hat{V}_i(j_1-m,j_2) & \text{if}~ j_1 > m,j_2 \leq m,  i \leq r_{\mathrm{CS}} \\
\tilde{V}_i(j_1,j_2-m) & \text{if}~ j_2 > m,  i \leq r_{\mathrm{CS}} \\
\dot{V}_{i-r_{\mathrm{CS}}}(j_1,j_2) & \text{if}~ r_{\mathrm{CS}} < i \\
\end{cases},
\end{align*}
for $j_1=1,\dots, p$, and $j_2 = 1,\dots, n$, and
\begin{align*}
W_i(k_1,k_2) =: \begin{cases}
A_i(k_1,k_2) & \text{if}~ k_1 \leq m, i \leq s\\
B_{i-s}(k_1,k_2) & \text{if}~ k_1 \leq m, s < i \leq s+t\\
C_{i-s-t}(k_1,k_2) & \text{if}~ k_1 \leq m, s+t < i \leq s+2t\\
D_{i-s-2t}(k_1,k_2) & \text{if}~ k_1 \leq m, s+2t < i \leq r_{\mathrm{CS}}\\
\hat{W}_i(k_1-m,k_2) & \text{if}~ k_1 > m,  i \leq r_{\mathrm{CS}} \\
\dot{W}_{i-r_{\mathrm{CS}}}(k_1,k_2) & \text{if}~ r_{\mathrm{CS}} < i \\
\end{cases},
\end{align*}
for $k_1=1,\dots, n$, and $k_2 = 1,\dots, m$, where $A_i := \mathrm{reshape} \left(a_i,m \times m \right)$, for $i=1, \dots, s$, $B_j:= \mathrm{reshape} \left(b_j,m \times m \right)$, for $j=1, \dots, t$, and similarly for $C_j$ and $D_j$, and 
\begin{align*}
\tilde{U}_k &:= \mathrm{reshape} \big( \tilde{U}(:,k),~m \times (p-m) \big), \\
\tilde{V}_k &:= \mathrm{reshape} \big( \tilde{V}(:,k),~ p \times (n-m) \big),\\
\hat{V}_k &:= \mathrm{reshape} \big( \hat{V}(:,k),~ (p-m) \times m \big), \\
\hat{W}_k &:= \mathrm{reshape} \big( \hat{W}(:,k),~ (n-m) \times m \big),
\end{align*}
for $k=1,\dots, r_{\mathrm{CS}}$, and
\begin{align*}
\dot{U}_{i'} &:= \mathrm{reshape} \big( \dot{U}(:,i'),~m \times p \big), \\
\dot{V}_{i'} &:= \mathrm{reshape} \big( \dot{V}(:,i'),~ p \times n \big),\\
\dot{W}_{i'} &:= \mathrm{reshape} \big( \dot{W}(:,i'),~ n \times m \big),
\end{align*}
for $i'=1,\dots, (r-r_{\mathrm{CS}})$, and thus $\tilde{U} \in \mathbb{R}^{m(p-m) \times r_{\mathrm{CS}}}$, $\tilde{V} \in \mathbb{R}^{p(n-m) \times r_{\mathrm{CS}}}$, $\hat{V} \in \mathbb{R}^{(p-m)m \times r_{\mathrm{CS}}}$, $\hat{W} \in \mathbb{R}^{(n-m)m \times r_{\mathrm{CS}}}$, $\dot{U} \in \mathbb{R}^{mp \times (r-r_{\mathrm{CS}})}$, $\dot{V} \in \mathbb{R}^{pn \times (r-r_{\mathrm{CS}})}$, and $\dot{W} \in \mathbb{R}^{nm \times (r-r_{\mathrm{CS})}}$.
The proportions in \Cref{fig:tikz_gen_CS} are those of $T_{234}$, $(s,t):=(2,2)$, and ${r}:=20$. We show in the experiments that such PDs indeed exist. Note that, contrary to the case when $m$, $n$, and $p$ are equal, the CS rank $r_{\mathrm{CS}}$ does not equal the rank $r$, because this would be too restrictive. Also in the experiments that follow we did not find PDs for which both are equal. 
This is likely because of the interaction of the CS part with the parts $\tilde{U}$, $\tilde{V}$, $\hat{V}$, and $\hat{W}$. However, including this structure still reduces the number of parameters from $(mp+np+mn)r$ to $(mp+np+mn)r-2m^2 r_{\mathrm{CS}}$.

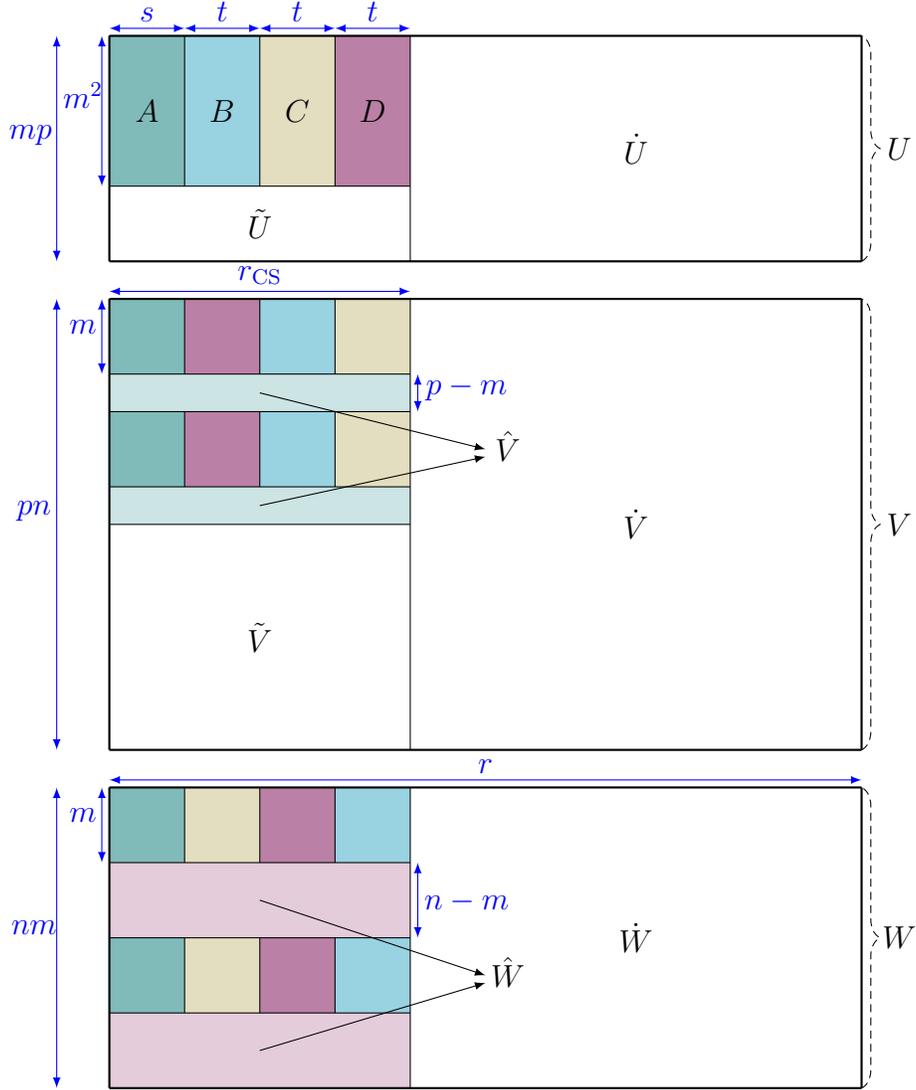
\begin{figure}[H]
\centering
\caption[Illustration of the generalized CS structure in the factor matrices.]{Illustration of the generalized CS factor matrices of a $\mathrm{PD}_{20}\left(T_{234}\right)$, with $(s,t):=(2,2)$.}
\begin{tikzpicture}
\coordinate (U1) at (0cm,3cm); 
\coordinate (U2) at (0cm,0cm); 
\coordinate (U3) at (10cm,0cm); 
\coordinate (U4) at (10cm,3cm); 
\coordinate (A2) at (0cm,1cm); 
\coordinate (A3) at (1cm,1cm); 
\coordinate (A4) at (1cm,3cm); 
\coordinate (B1) at (2cm,3cm); 
\coordinate (B2) at (2cm,1cm); 
\coordinate (C1) at (3cm,3cm); 
\coordinate (C2) at (3cm,1cm); 
\coordinate (D1) at (4cm,3cm); 
\coordinate (D2) at (4cm,1cm); 
\fill[mpigreen!50] (U1) -- (A2) -- (A3) -- (A4) -- cycle;
\fill[mpiblue!50] (B1) -- (B2) -- (A3) -- (A4) -- cycle;
\fill[mpisand!150] (C1) -- (C2) -- (B2) -- (B1) -- cycle;
\fill[mpired!50] (D1) -- (D2) -- (C2) -- (C1) -- cycle;
\draw[blue,<->] (-0.7cm,3cm) -- (-0.7cm,0cm) node[midway,above,xshift=-0.35cm,yshift=-0.1cm]{${mp}$};
\draw[blue,<->] (-0.1cm,3cm) -- (-0.1cm,1cm) node[midway,above,xshift=-0.25cm,yshift=-0.1cm]{${m^2}$};
\draw[blue,<->] (0cm,3.1cm) -- (1cm,3.1cm) node[midway,above,xshift=0cm,yshift=-0.05cm]{${s}$};
\draw[blue,<->] (1cm,3.1cm) -- (2cm,3.1cm) node[midway,above,xshift=0cm,yshift=-0.05cm]{${t}$};
\draw[blue,<->] (2cm,3.1cm) -- (3cm,3.1cm) node[midway,above,xshift=0cm,yshift=-0.05cm]{${t}$};
\draw[blue,<->] (3cm,3.1cm) -- (4cm,3.1cm) node[midway,above,xshift=0cm,yshift=-0.05cm]{${t}$};
\draw(0.5cm,2cm) node {$A$};
\draw(1.5cm,2cm) node {$B$};
\draw(2.5cm,2cm) node {$C$};
\draw(3.5cm,2cm) node {$D$};
\draw[thick] (U1) -- (U2); 
\draw[thick] (U2) -- (U3); 
\draw[thick] (U3) -- (U4); 
\draw[thick] (U4) -- (U1); 
\coordinate (U5) at (4cm,0cm); 
\draw (A2) -- (D2); 
\draw (U5) -- (D1);
\draw(2cm,0.5cm) node {$\tilde{U}$};
\draw(7cm,1.5cm) node {$\dot{U}$}; 
\draw (A3) -- (A4); 
\draw (B1) -- (B2);
\draw (C1) -- (C2); 

\draw[densely dashed,decorate,decoration={brace,amplitude=7pt,mirror},xshift=0cm,yshift=0cm] 
(U3) -- (U4) node[midway,xshift=0.5cm, yshift = 0cm] {$U$};

\coordinate (V1) at (0cm,-0.5cm); 
\coordinate (V2) at (0cm,-6.5cm); 
\coordinate (V3) at (10cm,-6.5cm); 
\coordinate (V4) at (10cm,-0.5cm); 
\coordinate (A2) at (0cm,-1.5cm); 
\coordinate (A3) at (1cm,-1.5cm); 
\coordinate (A4) at (1cm,-0.5cm); 
\fill[mpigreen!50] (V1) -- (A2) -- (A3) -- (A4) -- cycle;
\coordinate (D1) at (2cm,-0.5cm); 
\coordinate (D2) at (2cm,-1.5cm); 
\fill[mpired!50] (D1) -- (D2) -- (A3) -- (A4) -- cycle;
\coordinate (B1) at (3cm,-0.5cm); 
\coordinate (B2) at (3cm,-1.5cm); 
\fill[mpiblue!50] (D1) -- (D2) -- (B2) -- (B1) -- cycle;
\coordinate (C1) at (4cm,-0.5cm); 
\coordinate (C2) at (4cm,-1.5cm); 
\fill[mpisand!150] (C1) -- (C2) -- (B2) -- (B1) -- cycle;
\coordinate (V5) at (4cm,-6.5cm); 
\fill[mpigreen!20] (A2) -- (C2) -- (4cm,-2cm) -- (0cm,-2cm) -- cycle;
\draw (A2) -- (C2); 
\draw (A3) -- (A4); 
\draw (B1) -- (B2);
\draw (D1) -- (D2); 

\draw(7cm,-3.5cm) node {$\dot{V}$}; 

\coordinate (A1) at (0cm,-2cm); 
\coordinate (A2) at (0cm,-3cm); 
\coordinate (A3) at (1cm,-3cm); 
\coordinate (A4) at (1cm,-2cm); 
\fill[mpigreen!50] (A1) -- (A2) -- (A3) -- (A4) -- cycle;
\coordinate (D1) at (2cm,-2cm); 
\coordinate (D2) at (2cm,-3cm); 
\fill[mpired!50] (D1) -- (D2) -- (A3) -- (A4) -- cycle;
\coordinate (B1) at (3cm,-2cm); 
\coordinate (B2) at (3cm,-3cm); 
\fill[mpiblue!50] (D1) -- (D2) -- (B2) -- (B1) -- cycle;
\coordinate (C1) at (4cm,-2cm); 
\coordinate (C2) at (4cm,-3cm); 
\fill[mpisand!150] (C1) -- (C2) -- (B2) -- (B1) -- cycle;
\fill[mpigreen!20] (A2) -- (C2) -- (4cm,-3.5cm) -- (0cm,-3.5cm) -- cycle;
\draw (A3) -- (A4); 
\draw (B1) -- (B2);
\draw (D1) -- (D2); 

\draw[blue,<->] (-0.7cm,-0.5cm) -- (-0.7cm,-6.5cm) node[midway,above,xshift=-0.3cm,yshift=-0.1cm]{${pn}$};
\draw[blue,<->] (-0.1cm,-0.5cm) -- (-0.1cm,-1.5cm) node[midway,above,xshift=-0.25cm,yshift=-0.1cm]{${m}$};
\draw[blue,<->] (4.1cm,-1.5cm) -- (4.1cm,-2cm) node[midway,above,xshift=0.65cm,yshift=-0.25cm]{${p-m}$};
\draw[blue,<->] (0cm,-0.4cm) -- (4cm,-0.4cm) node[midway,above,xshift=0cm,yshift=-0.05cm]{${r}_{\mathrm{CS}}$};

\draw[thick] (V1) -- (V2); 
\draw[thick] (V2) -- (V3); 
\draw[thick] (V3) -- (V4); 
\draw[thick] (V4) -- (V1);
\draw (A2) -- (C2);
\draw (A1) -- (C1);
\draw (0cm,-3.5cm) -- (4cm,-3.5cm);
\coordinate (C1) at (4cm,-0.5cm); 
\draw (V5) -- (C1);
\draw(2cm,-5cm) node {$\tilde{V}$};
\draw[->] (2cm,-1.75cm) -- (5cm,-2.5cm) node[above,xshift=0.3cm,yshift=-0.3cm]{$\hat{V}$};
\draw[->] (2cm,-3.25cm) -- (5cm,-2.6cm);

\draw [densely dashed,decorate,decoration={brace,amplitude=7pt,mirror},xshift=0cm,yshift=0cm] 
(V3) -- (V4) node[midway,xshift=0.5cm, yshift = 0cm] {$V$};

\coordinate (W1) at (0cm,-7cm); 
\coordinate (W2) at (0cm,-11cm); 
\coordinate (W3) at (10cm,-11cm); 
\coordinate (W4) at (10cm,-7cm); 
\coordinate (A2) at (0cm,-8cm); 
\coordinate (A3) at (1cm,-8cm); 
\coordinate (A4) at (1cm,-7cm); 
\fill[mpigreen!50] (W1) -- (A2) -- (A3) -- (A4) -- cycle;
\coordinate (D1) at (2cm,-7cm); 
\coordinate (D2) at (2cm,-8cm); 
\fill[mpisand!150] (D1) -- (D2) -- (A3) -- (A4) -- cycle;
\coordinate (B1) at (3cm,-7cm); 
\coordinate (B2) at (3cm,-8cm); 
\fill[mpired!50] (D1) -- (D2) -- (B2) -- (B1) -- cycle;
\coordinate (C1) at (4cm,-7cm); 
\coordinate (C2) at (4cm,-8cm); 
\fill[mpiblue!50] (C1) -- (C2) -- (B2) -- (B1) -- cycle;
\fill[mpired!20] (A2) -- (C2) -- (4cm,-9cm) -- (0cm,-9cm) -- cycle;
\draw (A3) -- (A4); 
\draw (B1) -- (B2);
\draw (D1) -- (D2); 

\coordinate (A1) at (0cm,-9cm); 
\coordinate (A2) at (0cm,-10cm); 
\coordinate (A3) at (1cm,-10cm); 
\coordinate (A4) at (1cm,-9cm); 
\fill[mpigreen!50] (A1) -- (A2) -- (A3) -- (A4) -- cycle;
\coordinate (D1) at (2cm,-9cm); 
\coordinate (D2) at (2cm,-10cm); 
\fill[mpisand!150] (D1) -- (D2) -- (A3) -- (A4) -- cycle;
\coordinate (B1) at (3cm,-9cm); 
\coordinate (B2) at (3cm,-10cm); 
\fill[mpired!50] (D1) -- (D2) -- (B2) -- (B1) -- cycle;
\coordinate (C1) at (4cm,-9cm); 
\coordinate (C2) at (4cm,-10cm); 
\fill[mpiblue!50] (C1) -- (C2) -- (B2) -- (B1) -- cycle;
\fill[mpired!20] (A2) -- (C2) -- (4cm,-11cm) -- (W2) -- cycle;
\draw (A3) -- (A4); 
\draw (B1) -- (B2);
\draw (D1) -- (D2); 

\draw[blue,<->] (-0.7cm,-7cm) -- (-0.7cm,-11cm) node[midway,above,xshift=-0.3cm,yshift=-0.1cm]{${nm}$};
\draw[blue,<->] (-0.1cm,-7cm) -- (-0.1cm,-8cm) node[midway,above,xshift=-0.25cm,yshift=-0.1cm]{${m}$};
\draw[blue,<->] (4.1cm,-8cm) -- (4.1cm,-9cm) node[midway,above,xshift=0.65cm,yshift=-0.3cm]{${n-m}$};
\draw[blue,<->] (0cm,-6.9cm) -- (10cm,-6.9cm) node[midway,above,xshift=0cm,yshift=-0.05cm]{${r}$};
\draw[thick] (W1) -- (W2); 
\draw[thick] (W2) -- (W3); 
\draw[thick] (W3) -- (W4); 
\draw[thick] (W4) -- (W1); 
\draw (0cm,-8cm) -- (4cm,-8cm);
\draw (0cm,-9cm) -- (4cm,-9cm);
\draw (0cm,-10cm) -- (4cm,-10cm);
\draw (4cm,-7cm) -- (4cm,-11cm);
\draw[->] (2cm,-8.5cm) -- (5cm,-9.5cm) node[above,xshift=0.3cm,yshift=-0.3cm]{$\hat{W}$};
\draw[->] (2cm,-10.5cm) -- (5cm,-9.6cm);

\draw(7cm,-9cm) node {$\dot{W}$};

\draw [densely dashed,decorate,decoration={brace,amplitude=7pt,mirror},xshift=0cm,yshift=0cm] 
(W3) -- (W4) node[midway,xshift=0.5cm, yshift = 0cm] {$W$};

\end{tikzpicture}
\label{fig:tikz_gen_CS}
\end{figure}

\section{Numerical experiments} \label{sec:num_exp}

This section shows the results that we obtained with the AL method from \cite{vermeylen2023stability} to find PDs of MMTs with the generalized CS structure for different combinations of $m$, $p$, $n$, $s$ and $t$. The rank $r$ is always chosen as the lowest rank $\tilde{r}$ for which a solution is known to exist in the literature. For the first two experiments, this is also known to be the lowest rank possible \cite{ALEKSEYEV_1985,Alekseev2013OnTE}. The AL method takes as input an upper and lower bound on the elements in the factor matrices: $l \leq x \leq u$.
To show the advantage of the new structure, we set $u:=-l:=1$ and generate 50 random starting points of size $10^{-2}$ with the built-in function $\mlin{randn}$ of \Matlab:
\begin{equation} \label{eq:x0}
x_0:= 10^{-2} \cdot \mlin{randn} \left((mp+np+mn)r-2m^2 r_{\mathrm{CS}} \times 1 \right).
\end{equation}
The number of inner iterations of the LM method was set to 50 and the number of outer iterations to 15. The tolerance on the gradient and on the bound constraint were both set to $10^{-13}$.

\subsection*{Experiment 1: $\mathbf{T_{223}, r:={r}^*=11}$}
\Cref{tab:results_gen_CS_223} shows the results that are obtained with the AL method from \cite{vermeylen2023stability} to find $\mathrm{PD}_{11}\left(T_{223} \right)$s for different combinations of $s$ and $t$. The second to last column (from left to right) indicates the number of exact solutions found for these 50 random starting points. With an exact solution, a solution for which the optimality conditions, i.e., the tolerance on the gradient and constraint, are met, and furthermore for which the cost function is smaller than $10^{-12}$. The last column in the table indicates the number of practical solutions that were obtained when starting from the exact numerical PDs and by adding the constraint $h_{\mathrm{discr}}$ from \eqref{eq:h_discr}, scaled with a factor $0.1$,
to the optimization problem. As can be seen, for many combinations, exact and practical PDs exist. An example practical $\mathrm{PD}_{11}\left(T_{223} \right)$ with $(s,t):=(2,2)$ is given by the following equations:
\begin{equation} \label{eq:223_R11_S2_T2_discr}
\begin{split}
A &= \begin{bNiceMatrix}[r,columns-width=0.4cm] 
0 & 1 \\
0 & 0 \\
0 & 0 \\
1 & 0
\end{bNiceMatrix},~
B = \begin{bNiceMatrix}[r,columns-width=0.4cm] 
 0 & 0\\
 0 & 0\\
-1 &-1\\
 0 & 0
\end{bNiceMatrix},~ C = \begin{bNiceMatrix}[r,columns-width=0.4cm] 
-1 & 0\\
 1 &-1\\
-1 & 0\\
 0 & 0\\
\end{bNiceMatrix},~ D = \begin{bNiceMatrix}[r,columns-width=0.4cm]
 0 & 0\\
 1 & 1\\
 0 &-1\\
 0 & 1\\
\end{bNiceMatrix},\\
\hat{V} &= \begin{bNiceMatrix}[r,columns-width=0.4cm]
0 & 0 & 0 & 0 & 0 & 0 & 1 &-1 \\
0 & 0 & 0 & 0 & 0 & 0 &-1 & 1
\end{bNiceMatrix},~\tilde{U} = [],~\tilde{V} = [],  \\
\hat{W} &= \begin{bNiceMatrix}[r,columns-width=0.4cm]
0 & 0 &-1 & 0 & 0 & 0 & 0 &-1 \\
0 & 0 &-1 & 0 & 0 & 0 & 0 &-1
\end{bNiceMatrix}, \\
\dot{U} &= \begin{bNiceMatrix}[r,columns-width=0.4cm]
  0 &  1 &  0 \\ 
  0 & -1 & -1 \\
 -1 &  1 &  0 \\
  1 & -1 & -1 \\
\end{bNiceMatrix},~\dot{W} = \begin{bNiceMatrix}[r,columns-width=0.4cm]
  0 &  0 &  0 \\
  0 &  0 &  1 \\
  0 &  0 &  0 \\ 
  0 &  0 &  0 \\
 -1 & -1 &  0 \\
  1 &  0 & -1 \\
\end{bNiceMatrix},~
\dot{V} = \begin{bNiceMatrix}[r,columns-width=0.4cm]
  0 &  0 &  0 \\
  0 &  0 &  0 \\
  0 & -1 &  1 \\ 
  1 &  0 &  0 \\
  0 &  0 &  0 \\
  1 &  0 &  1 \\
\end{bNiceMatrix}.
\end{split}
\end{equation}
Also the size and minimal, maximal, and number of different ranks of $J_{\mathrm{CS,gen}}$, i.e., the Jacobian matrix of the new generalized CS structure, are given at the solutions. For a list of all the different ranks, we refer to the detailed results of the experiment, which are publicly available \footnote{\url{https://github.com/CharlotteVermeylen/ALM_FMM_gen_CS}}. Note that the second dimension of the Jacobian matrix and the ranks decrease when ${r}_{\mathrm{CS}}$ increases. 

\begin{table}[H]
\centering
\caption[Number of exact (numerical and practical) PDs of rank 11 of $T_{223}$, and the size and different ranks of the Jacobian matrix for different combinations of the CS parameters $s$ and $t$ using the AL method from \cite{vermeylen2023stability} on 50 small random starting points.]{Number of exact (numerical and practical) $\mathrm{PD}_{11}\left(T_{223} \right)$s that are obtained for different combinations of $s$ and $t$ using the AL method from \cite{vermeylen2023stability} on 50 random starting points generated as in \eqref{eq:x0}. Also the size and the different ranks of the Jacobian matrix are given.}
\label{tab:results_gen_CS_223}
\begin{tabular}{c | c c | c c c c | c c } 
\multirow{2}{*}{\boldmath $r_{\mathrm{CS}}$} & \multirow{2}{*}{\boldmath ${s}$} & \multirow{2}{*}{\boldmath ${t}$} & \multirow{2}{*}{\boldmath $\mathrm{size}(J_{\mathrm{CS,gen}})$} & \multicolumn{3}{c|}{\boldmath $\mathrm{rank}\left( J_{\mathrm{CS,gen}}(x^*) \right)$} & \multirow{2}{*}{\textbf{\#} \boldmath$x^*$} & \multirow{2}{*}{\textbf{pract.}} \\
& & & & $\min$ & $\max$ & \# & & \\
\midrule
10 & 7 & 1 & $144 \times 96$  & $82$ & $82$ & 1 & 10 & 0 \\
\hline
9 & 6 & 1 & $144 \times 104$ & $88$ & $89$ & 2 & 32 & 0 \\
\hline
\multirow{2}{*}{8} & 2 & 2 & \multirow{2}{*}{$144 \times 112$} & $92$ & $97$ & 4 & 8 & 1 \\
& 5 & 1 & & $93$ & $98$ & 5 & 45 & 0 \\
\hline
\multirow{3}{*}{7} & 1 & 2 & \multirow{3}{*}{$144 \times 120$} & $85$ & $98$ & 7 & 21 & 10 \\
& 4 & 1 & & $87$ & $103$ & 10 & 46 & 16 \\
& 7 & 0 & & $101$ & $105$ & 2 & 15 & 0 \\
\hline
\multirow{3}{*}{6} & 0 & 2 & \multirow{3}{*}{$144 \times 128$} & $91$ & $108$ & 14 & 29 & 13 \\
& 3 & 1 & & $93$ & $107$ & 13 & 48 & 36 \\
& 6 & 0 & & $107$ & $111$ & 5 & 29 & 0 \\
\hline
\multirow{2}{*}{5} & 2 & 1 & \multirow{2}{*}{$144 \times 136$} & $99$ & $117$ & 13 & 38 & 25 \\
& 5 & 0 & & $112$ & $117$ & 6 & 44 & 0 \\
\hline
\multirow{2}{*}{4} & 1 & 1 & \multirow{2}{*}{$144 \times 144$}  & $105$ & $123$ & 18 & 43 & 30 \\
 & 4 & 0 & & $107$ & $123$ & 16 & 48 & 18 \\
\hline
\multirow{2}{*}{3} & 0 & 1 & \multirow{2}{*}{$144 \times 152$} & $111$ & $129$ & 16 & 39 & 18 \\
& 3 & 0 & & $113$ & $129$ & 15 & 49 & 29  \\
\hline
2 & 2 & 0 & $144 \times 160$ & $119$ & $133$ & 14 & 50 & 21 \\
\hline
1 & 1 & 0 & $144 \times 168$ & $123$ & $135$ & 11 & 49 & 14 \\
\hline
0 & 0 & 0 & $144 \times 176$ & $125$ & $135$ & 9 & 50 & 15 \\
\bottomrule
\end{tabular}
\end{table}

As can be seen from \Cref{tab:results_gen_CS_223}, the highest value for ${r}_{\mathrm{CS}}$ that is obtained is 10 and is obtained for $(s,t):=(7,1)$. It was not possible to transform this PD into a practical one using the constraint $h_{\mathrm{discr}}$. 

In the last row, as a reference the results for $(s,t):=(0,0)$, i.e., without the CS structure, are given. Note that the number of numerical solutions in general decreases when ${r}_{\mathrm{CS}}$ increases but, on the other hand, the number of practical PDs increases for some combinations of $s$ and $t$. 

In \Cref{fig:results_223_11R_7S_0T_rng1-50_10-2_randn_ALM_ul1_30it_15good}, the convergence of the cost function is shown for the experiment with $(s,t):=(7,0)$. As can be seen, 15 out of the 50 starting points converge quickly to a numerically exact PD in less than 100 iterations and the other starting points get stuck in swaps.

\begin{figure}[H]
\hspace{-0.7cm}
\includegraphics[width= 1.1\linewidth]{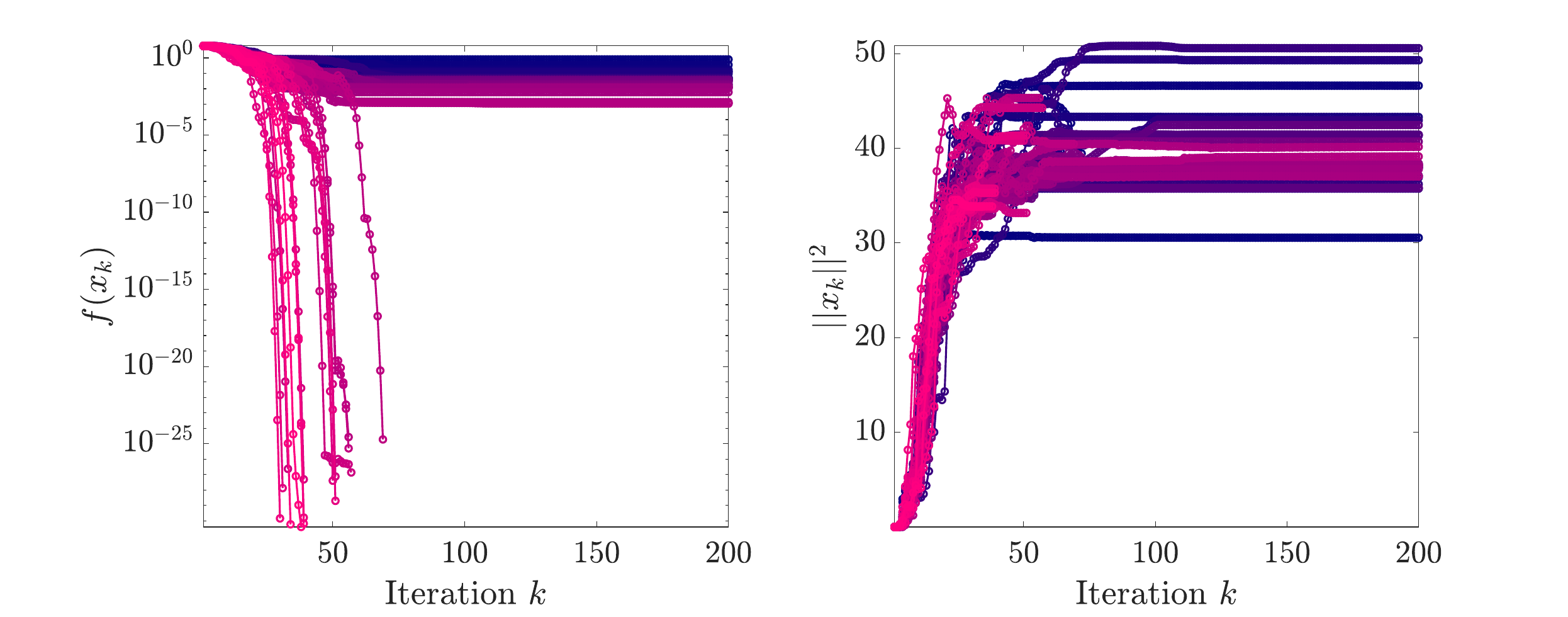}
\vspace{-0.5cm}
\caption[Convergence of the cost function to find PDs of rank 11 of $T_{223}$, with CS parameters $s=7$ and $t=0$ for 50 small random starting points using the AL method with a bound constraint of 1 on the absolute value of the elements in the factor matrices of the PD.]{Convergence of the cost function in \eqref{eq: unconstr_LSQ_min} using the AL method from \cite{vermeylen2023stability} with $u:=-l:=1$ and for 50 random starting points generated with $\mlin{randn}$ of size $10^{-2}$ to find $\mathrm{PD}_{11}\left(T_{223}\right)$s, with $(s,t):=(7,0)$.}
\label{fig:results_223_11R_7S_0T_rng1-50_10-2_randn_ALM_ul1_30it_15good}
\end{figure}

\subsection*{Experiment 2: $\mathbf{T_{224}, r:=r^*=14}$} The results for $\mathrm{PD}_{14}\left(T_{224} \right)$s are shown in \Cref{tab:results_gen_CS_224}, again for different values of $r_{\mathrm{CS}}$, $s$ and $t$. The largest CS rank that is obtained is again $r_{\mathrm{CS}} = 10$ but now both PDs with $(s,t)= (4,2)$ and $(s,t)= (7,1)$ were found. However, again none of these PDs were discretizable using the constraint $h_{\mathrm{discr}}$.

An example practical $\mathrm{PD}_{14}\left(T_{224} \right)$ that is obtained for $(s,t)= (3,1)$ has as submatrices:
\begin{align*} 
A &= \begin{bNiceMatrix}[r,columns-width=0.4cm] 
1 & 1 & 0\\
1 & 0 & 0\\
0 &-1 & 0\\
0 & 0 & 1
\end{bNiceMatrix},~
B = \begin{bNiceMatrix}[r,columns-width=0.4cm] 
-1 \\
-1 \\
 1 \\
 1 
\end{bNiceMatrix},~ C = \begin{bNiceMatrix}[r,columns-width=0.4cm] 
 0 \\
 0 \\
 1 \\
 0 
\end{bNiceMatrix},~ D = \begin{bNiceMatrix}[r,columns-width=0.4cm] 
 0 \\
 1 \\
 0 \\
 0 
\end{bNiceMatrix},\\ 
\tilde{V} &= \hat{W} = \begin{bNiceMatrix}[r,columns-width=0.4cm]
0 & 0 & 0 & 0 & 0 & 0 \\
0 & 0 & 0 & 0 & 0 & 0 \\
0 & 0 & 0 & 0 & 0 & 0 \\
0 & 0 & 0 & 0 & 0 & 0 
\end{bNiceMatrix},~\tilde{U} = [],~\hat{V} = [],\\
\dot{U} &= \begin{bNiceMatrix}[r,columns-width=0.4cm]
 0 & 0 & 1 & 1 & 1 & 1 & 0 &-1\\
-1 & 0 & 0 & 1 & 1 & 0 & 0 & 0\\
 0 &-1 & 0 &-1 & 0 & 1 & 0 & 0\\
-1 &-1 & 0 & 0 & 0 & 0 &-1 & 1
\end{bNiceMatrix},\\
\dot{V} &= \begin{bNiceMatrix}[r,columns-width=0.4cm]
0 & 0 & 0 &-1 & 0 & 0 & 0 & 0\\
0 & 0 & 0 &-1 & 0 & 0 & 0 & 0\\
0 & 0 & 0 & 1 & 0 & 0 & 0 & 0\\
0 & 0 & 0 & 0 & 0 & 0 & 0 & 0\\
0 & 0 &-1 & 0 &-1 & 0 & 0 & 0\\
0 &-1 & 1 & 0 & 0 &-1 & 0 & 1\\
1 & 0 & 0 & 0 & 1 & 0 & 1 &-1\\
0 & 1 & 0 & 0 & 0 & 0 &-1 & 0
\end{bNiceMatrix},\\
\dot{W} &= \begin{bNiceMatrix}[r,columns-width=0.4cm]
 0 & 0 & 0 & 1 & 0 & 0 & 0 & 0\\
 0 & 0 & 0 & 1 & 0 & 0 & 0 & 0\\
 0 & 0 &-1 & 0 & 0 &-1 & 0 & 0\\
 0 &-1 & 0 & 0 & 0 &-1 &-1 & 1\\
 0 & 0 & 0 &-1 & 0 & 0 & 0 & 0\\
 0 & 0 & 0 & 0 & 0 & 0 & 0 & 0\\
-1 & 0 & 1 & 0 &-1 & 0 & 0 & 1\\
-1 & 0 & 0 & 0 & 0 & 0 & 1 & 0
\end{bNiceMatrix}.
\end{align*}
Again it can be noticed that by including the generalized CS structure, more practical PDs are obtained and furthermore, more different values of the rank of the Jacobian and thus more non-inv-equivalent PDs are obtained as long as the CS rank is not set too high.

\begin{table}[H]
\centering
\caption[Number of exact (numerical and practical) PDs of rank 14 of $T_{224}$, and the size and different ranks of the Jacobian matrix for different combinations of the CS parameters $s$ and $t$ using the AL method from \cite{vermeylen2023stability} on 50 small random starting points.]{Number of exact (numerical and practical) $\mathrm{PD}_{14}\left(T_{224} \right)$s that are obtained for different combinations of $s$ and $t$ using the AL method from \cite{vermeylen2023stability} on 50 random starting points generated as in \eqref{eq:x0}. Also the size and the minimal, maximal, and number of different ranks of the Jacobian matrix are given.}
\label{tab:results_gen_CS_224}
\begin{tabular}{c | c c | c c c c | c c } 
\multirow{2}{*}{\boldmath $r_{\mathrm{CS}}$} & \multirow{2}{*}{\boldmath ${s}$} & \multirow{2}{*}{\boldmath ${t}$} & \multirow{2}{*}{\boldmath $\mathrm{size}(J_{\mathrm{CS,gen}})$} & \multicolumn{3}{c|}{\boldmath $\mathrm{rank}\left( J_{\mathrm{CS,gen}}(x^*) \right)$} & \multirow{2}{*}{\textbf{\#} \boldmath$x^*$} & \multirow{2}{*}{\textbf{pract.}} \\
& & & & $\min$ & $\max$ & \# & & \\
\midrule
\multirow{2}{*}{10} & 4 & 2 & \multirow{2}{*}{$256 \times 200$} & $170$ & 170 & 1 & 1 & 0\\
& 7 & 1 & & $172$ & $178$ & 2 & 6 & 0 \\
\hline
\multirow{2}{*}{9} & 3 & 2 & \multirow{2}{*}{$256 \times 208$} & $183$ & $183$ & 1 & 2 & 0\\
& 6 & 1 & & $178$ & $185$ & 5 & 15 & 0 \\
\hline
\multirow{3}{*}{8} & 2 & 2 & \multirow{3}{*}{$256 \times 216$} & $187$ & $190$ & 3 & 6 & 0\\
& 5 & 1 & & $185$ & $190$ & 5 & 38 & 0 \\
& 8 & 0 & & $184$ & $191$ & 2 & 3 & 0 \\
\hline
\multirow{4}{*}{7} & 1 & 2 & \multirow{4}{*}{$256 \times 224$} & $178$ & $193$ & 6 & 22 & 9 \\
& 4 & 1 & & $180$ & 197 & 10 & 44 & 14 \\
& 7 & 0 & & $190$ & $199$ & 5 & 20 & 0 \\
\hline
\multirow{3}{*}{6} & 0 & 2 & \multirow{3}{*}{$256 \times 232$} & $184$ & 198 & 9 &  20 & 9 \\
& 3 & 1 & & $184$ & $201$ & 10 & 41 & 21 \\
& 6 & 0 & & $196$ & $205$ & 7 & 20 & 0 \\
\hline
\multirow{2}{*}{5} & 2 & 1 & \multirow{2}{*}{$256 \times 240$} & $188$ & $212$ & 15 &  36 & 16 \\
& 5 & 0 & & $205$ & $210$ & 6 & 36 & 0 \\
\hline
\multirow{2}{*}{4} & 1 & 1 & \multirow{2}{*}{$256 \times 248$} & $196$ & $214$ & 13 & 30 & 12 \\
& 4 & 0 &  & $196$ & $217$ & 15 & 44 & 12 \\
\hline
\multirow{2}{*}{3} & 0 & 1 & \multirow{2}{*}{$256 \times 256$} & $202$ & $221$ & 11 & 25 & 4 \\
& 3 & 0 & & $202$ & $221$ & 15 & 42 & 11 \\
\hline
2 & 2 & 0 & $256 \times 264$ & $214$ & $227$ & 10 & 49 & 4 \\
\hline
1 & 1 & 0 & $256 \times 272$ & $217$ & $228$ & 9 & 45 & 4 \\
\hline
0 & 0 & 0 & $256 \times 280$ & $217$ & $228$ & 8 & 50 & 5 \\
\bottomrule
\end{tabular}
\end{table}

\subsection*{Experiment 3: $\mathbf{T_{233}, r:=\tilde{r}=15}$}
Thirdly, in \Cref{tab:results_gen_CS_233}, the results for $T_{233}$, rank 15 are shown. Now, the maximal value for $r_{\mathrm{CS}}$ that is obtained is 9, and one practical PD is obtained with parameters $(s,t)=(6,1)$:
\begin{equation} \label{eq:233_R15_S6_T1_discr}
\begin{split}
A &= \begin{bNiceMatrix}[r,columns-width=0.4cm] 
0 & 1 &  0 & 0 &  0 & 0  \\
0 & 0 & -1 & 0 &  1 & 0  \\
0 & 0 &  0 & 1 & -1 & 0 \\
0 & 0 &  1 & 1 & -1 & 0
\end{bNiceMatrix},~
B = \begin{bNiceMatrix}[r,columns-width=0.4cm] 
 0 \\
 0 \\
 1 \\
 0 
\end{bNiceMatrix},~ C = \begin{bNiceMatrix}[r,columns-width=0.4cm] 
  1 \\
  1 \\
 -1 \\
 -1
\end{bNiceMatrix},~ D = \begin{bNiceMatrix}[r,columns-width=0.4cm] 
  0 \\
  1 \\
  0 \\
  0
\end{bNiceMatrix}, \nonumber \\
\tilde{U} &= \begin{bNiceMatrix}[r,columns-width=0.4cm] 
 1 & 0 & 0 & 0 & 0 &  1 & 0 & 0 & 0 \\
 1 & 0 & 0 & 0 & 0 &  0 & 0 & 0 & 0
\end{bNiceMatrix}, \\
\hat{V} &= \begin{bNiceMatrix}[r,columns-width=0.4cm]
 0 & 0 & 0 & 0 &  0 & 1 &  0 &  0 &  1 \\
 0 & 1 & 0 & 1 & -1 &-1 &  1 &  1 & -1 
\end{bNiceMatrix}, \\
\tilde{V} &= \begin{bNiceMatrix}[r,columns-width=0.4cm]
-1 & -1 & 0 & 0 &  0 & 0 &  0 &  0 &  1 \\
 0 &  0 & 1 & 1 & -1 & 0 & -1 &  0 & -1 \\
-1 &  0 & 0 & 0 &  0 &-1 &  0 &  0 &  0
\end{bNiceMatrix}, \\
\hat{W} &= \begin{bNiceMatrix}[r,columns-width=0.4cm]
 0 & 0 & 0 & 0 & 0 & -1 & 0 & 0 & 0 \\
-1 & 0 & 0 & 0 & 0 & -1 & 0 & 0 & 0 \\
\end{bNiceMatrix},\\
\dot{U} &= \begin{bNiceMatrix}[r,columns-width=0.4cm]
  0 &  1 &  1 &  0 &  0 & 0 \\
 -1 &  0 &  1 &  1 &  1 & 1 \\ 
  0 &  1 &  0 &  0 & -1 & 0 \\
  0 &  0 &  0 &  0 &  0 & 1 \\
  1 & -1 & -1 &  0 &  0 & 0 \\ 
  1 &  0 & -1 & -1 & -1 &-1 \\ 
\end{bNiceMatrix},  \\
\dot{V} &= \begin{bNiceMatrix}[r,columns-width=0.4cm]
  0 &  0 &  0 &  0 &  0 &  0 \\
  0 &  0 &  0 &  0 &  0 &  0 \\
 -1 &  0 &  0 &  1 &  0 &  0 \\ 
  0 &  0 &  0 &  0 &  0 &  0 \\
  0 &  0 &  0 &  0 &  0 &  0 \\
  1 &  1 & -1 & -1 &  1 &  0 \\
 -1 &  0 &  1 &  0 &  0 &  0 \\ 
  0 &  0 &  0 &  1 & -1 &  1 \\
  0 &  0 &  0 &  0 &  0 &  0 \\ 
\end{bNiceMatrix},~\dot{W} = \begin{bNiceMatrix}[r,columns-width=0.4cm]
 -1 &  0 &  1 &  1 &  1 &  0 \\
  0 & -1 &  0 &  0 &  0 &  0 \\
 -1 &  1 &  1 &  1 &  1 &  0 \\ 
  0 &  0 &  0 & -1 & -1 &  0 \\
  0 &  0 &  0 &  0 & -1 & -1 \\
  1 &  0 &  0 & -1 &  0 &  1 \\
\end{bNiceMatrix}, 
\end{split}
\end{equation} 
 
\begin{table}[H]
\centering
\caption[Number of exact (numerical and practical) PDs of rank 15 of $T_{233}$, and the size and different ranks of the Jacobian matrix for different combinations of the CS parameters $s$ and $t$ using the AL method from \cite{vermeylen2023stability} on 50 small random starting points.]{Number of exact (numerical and practical) $\mathrm{PD}_{15}\left(T_{233} \right)$s that are obtained for different combinations of $s$ and $t$ using the AL method from \cite{vermeylen2023stability} on 50 random starting points generated as in \eqref{eq:x0}. Also the size and the different ranks of the Jacobian matrix are given.}
\label{tab:results_gen_CS_233}
\begin{tabular}{c | c c | c c c c | c c } 
\multirow{2}{*}{\boldmath $r_{\mathrm{CS}}$} & \multirow{2}{*}{\boldmath ${s}$} & \multirow{2}{*}{\boldmath ${t}$} & \multirow{2}{*}{\boldmath $\mathrm{size}(J_{\mathrm{CS,gen}})$} & \multicolumn{3}{c|}{\boldmath $\mathrm{rank}\left( J_{\mathrm{CS,gen}}(x^*) \right)$} & \multirow{2}{*}{\textbf{\#} \boldmath$x^*$} & \multirow{2}{*}{\textbf{pract.}} \\
& & & & $\min$ & $\max$ & \# & & \\
\midrule
\multirow{2}{*}{9} & 3 & 2 & \multirow{2}{*}{$324 \times 243$} & $216$ & $216$ & 1 & 2 & 0 \\
& 6 & 1 & & $218$ & $218$ & 1 & 1 & 1 \\
\hline
\multirow{2}{*}{8} & 2 & 2 & \multirow{2}{*}{$324 \times 251$} & $223$ & 227 & 2 & 2 & 0 \\
& 5 & 1 & & $225$ & $226$ & 2 & 4 & 2 \\
\hline
\multirow{2}{*}{7} & 1 & 2 & \multirow{2}{*}{$324 \times 259$}& $227$ & $228$ & 2 & 13 & 2 \\
& 4 & 1 & & $229$ & $234$ & 5 & 15 & 5 \\
\hline
\multirow{3}{*}{6} & 0 & 2 & \multirow{3}{*}{$324 \times 267$} & $232$ & $234$ & 3 & 10 & 1  \\ 
& 3 & 1 & & $234$ & $239$ & 6 & 25 & 10 \\
& 6 & 0 & & $237$ & 238 & 2 & 2 & 1  \\
\hline
\multirow{2}{*}{5} & 2 & 1 & \multirow{2}{*}{$324 \times 275$} & $240$ & $244$ & 4 & 12 & 7 \\
& 5 & 0 & & $244$ & $245$ & 2 & 2 & 0 \\
\hline
\multirow{2}{*}{4} & 1 & 1 & \multirow{2}{*}{$324 \times 283$} & $247$ & $250$ & 3 & 5 & 2 \\
& 4 & 0 & & $249$ & $254$ & 5 & 11 & 6 \\
\hline
\multirow{2}{*}{3} & 0 & 1 & \multirow{2}{*}{$324 \times 291$} & $252$ & $256$ & 3 & 6 & 2 \\
& 3 & 0 & & $254$ & $259$ & 6 & 18 & 6 \\
\hline
\multirow{1}{*}{2} & 2 & 0 & $324 \times 299$ & $254$ & $262$ & 6 & 20 & 4 \\
\hline
\multirow{1}{*}{1} & 1 & 0 & $324 \times 307$ & $260$ & $264$ & 5 & 30 & 0 \\
\hline
\multirow{1}{*}{0} & 0 & 0 & $324 \times 315$ & $262$ & $264$ & 3 & 29 & 0 \\
\bottomrule
\end{tabular}
\end{table}

\subsection*{Experiment 4: $\mathbf{T_{225}, r:=\tilde{r}=18}$}
Fourthly, in \Cref{tab:results_gen_CS_225}, the results for $T_{225}$, rank 18, are shown. The maximal CS rank is 10 as in \Cref{tab:results_gen_CS_223} and \Cref{tab:results_gen_CS_224}. However, the highest value of the CS rank for which a practical PD is obtained is 9 and is obtained for $(s,t) = (6,1)$. The CS submatrices of the PD that is obtained are:
\begin{equation*} 
\begin{split}
A &= \begin{bNiceMatrix}[r,columns-width=0.4cm] 
1 &-1 &  1 & 0 & 0 & 0  \\
0 & 1 & -1 & 0 & 0 & 0  \\
1 &-1 &  0 & 0 & 0 & 0 \\
0 & 0 &  0 & 1 & 1 &-1
\end{bNiceMatrix},~
B = \begin{bNiceMatrix}[r,columns-width=0.4cm] 
 0 \\
 1 \\
 0 \\
 0 
\end{bNiceMatrix},~ C = \begin{bNiceMatrix}[r,columns-width=0.4cm] 
  1 \\
 -1 \\
  1 \\
 -1
\end{bNiceMatrix},~ D = \begin{bNiceMatrix}[r,columns-width=0.4cm] 
  0 \\
  0 \\
 -1 \\
  0
\end{bNiceMatrix}, 
\end{split}
\end{equation*}
\begin{align*} 
\tilde{V} &= \begin{bNiceMatrix}[r,columns-width=0.4cm]
 0 & 0 & 0 & 0 & 0 & 0 & 0 & 0 & 0 \\
 0 & 0 & 0 &-1 & 0 & 1 & 0 & 0 & 0 \\ 
 0 & 0 & 0 & 0 & 0 & 0 & 0 & 0 & 0 \\
 0 & 0 & 0 &-1 & 0 & 1 & 0 & 0 & 0 \\
 0 & 0 & 0 & 0 & 0 & 0 & 0 & 0 & 0 \\ 
 0 & 0 & 0 & 0 &-1 & 1 & 0 & 0 & 0 \\
\end{bNiceMatrix},~ \tilde{U} = [],~ \hat{V} = [], 
\\
\hat{W} &= \begin{bNiceMatrix}[r,columns-width=0.4cm]
 0 & 0 & 0 & 0 & 0 & 0 & 0 & 0 & 0 \\
 0 & 0 & 0 & 0 &-1 & 1 & 0 & 0 & 0 \\ 
 0 & 0 & 0 &-1 & 0 & 1 & 0 & 0 & 0 \\
 0 & 0 & 0 & 0 & 0 & 0 & 0 & 0 & 0 \\
 0 & 0 & 0 & 0 & 1 &-1 & 0 & 0 & 0 \\ 
 0 & 0 & 0 & 1 & 0 &-1 & 0 & 0 & 0 \\
\end{bNiceMatrix},
\\
\dot{U} &= \begin{bNiceMatrix}[r,columns-width=0.4cm]
 0 & 1 & 0 & 1 & 0 & 0 &-1 &-1 & 1 \\
 0 & 0 & 0 & 0 & 1 & 1 & 0 & 0 & 1 \\ 
 1 & 0 & 1 &-1 & 0 & 0 & 1 & 0 &-1 \\
 1 & 0 & 1 &-1 & 0 & 0 & 0 & 0 &-1 \\
\end{bNiceMatrix}, 
\\
\dot{V} &= \begin{bNiceMatrix}[r,columns-width=0.4cm]
 0 & 0 & 0 & 0 & 0 & 0 & 0 & 0 & 0 \\
 0 & 0 & 0 & 0 & 0 & 0 & 0 & 0 & 0 \\ 
 0 & 0 & 0 & 0 & 0 & 0 & 0 & 0 & 0 \\
 0 & 0 & 0 & 0 & 0 & 0 & 0 & 0 & 0 \\
 0 & 1 & 0 & 0 &-1 & 0 & 0 & 0 & 0 \\ 
 0 & 1 & 0 &-1 & 0 & 0 &-1 & 0 & 0 \\
 0 & 0 & 1 & 1 &-1 & 0 & 0 & 0 & 1 \\
 0 & 0 & 1 & 0 & 0 & 0 & 0 & 0 & 0 \\ 
 0 & 0 & 1 & 1 & 0 &-1 & 0 &-1 & 1 \\
-1 & 0 & 1 & 0 & 0 & 0 & 0 & 0 & 0 \\
\end{bNiceMatrix},
\\
\dot{W} &= \begin{bNiceMatrix}[r,columns-width=0.4cm]
 0 & 0 & 0 & 0 & 0 & 0 & 0 & 0 & 0 \\
 0 & 0 & 0 & 0 & 0 & 0 & 0 & 0 & 0 \\ 
 0 & 1 & 0 & 0 & 0 & 0 &-1 & 0 & 0 \\
 1 & 0 & 1 & 1 & 0 & 0 & 1 &-1 & 0 \\
-1 & 0 & 0 & 0 & 0 & 0 & 0 & 1 & 0 \\ 
 0 & 0 & 0 & 0 & 0 & 0 & 0 & 0 & 0 \\
 0 & 0 & 0 & 0 & 0 & 0 & 0 & 0 & 0 \\
 0 & 0 & 0 & 1 &-1 &-1 & 1 & 0 &-1 \\ 
 0 & 0 & 0 &-1 & 0 & 1 &-1 & 0 & 1 \\
 0 & 0 & 0 & 0 & 0 &-1 & 0 & 0 & 0 \\
\end{bNiceMatrix}. 
\end{align*}

\begin{table}[H]
\centering
\caption[Number of exact (numerical and practical) PDs of rank 18 of $T_{225}$, and the size and different ranks of the Jacobian matrix for different combinations of the CS parameters $s$ and $t$ using the AL method from \cite{vermeylen2023stability} on 50 small random starting points.]{Number of exact (numerical and practical) $\mathrm{PD}_{18}\left(T_{225} \right)$s for different combinations of $s$ and $t$ using the AL method from \cite{vermeylen2023stability} on 50 random starting points generated as in \eqref{eq:x0}. Also the size and the different ranks of the Jacobian matrix are given.}
\label{tab:results_gen_CS_225}
\begin{tabular}{c | c c | c c c c | c c } 
\multirow{2}{*}{\boldmath $r_{\mathrm{CS}}$} & \multirow{2}{*}{\boldmath ${s}$} & \multirow{2}{*}{\boldmath ${t}$} & \multirow{2}{*}{\boldmath $\mathrm{size}(J_{\mathrm{CS,gen}})$} & \multicolumn{3}{c|}{\boldmath $\mathrm{rank}\left( J_{\mathrm{CS,gen}}(x^*) \right)$} & \multirow{2}{*}{\textbf{\#} \boldmath$x^*$} & \multirow{2}{*}{\textbf{pract.}} \\
& & & & $\min$ & $\max$ & \# & & \\
\midrule
\multirow{2}{*}{10} & 4 & 2 & \multirow{2}{*}{$400 \times 352$} & 302 & 315 & 9 & 13 & 0\\
& 7 & 1 & & 298 & 310 & 10 & 34 & 0 \\
\hline
\multirow{2}{*}{9} & 3 & 2 & \multirow{2}{*}{$400 \times 360$} & 309 & 319 & 9 & 17 & 0\\
& 6 & 1 & & 288 & 316 & 11 & 44 & 1 \\
\hline
\multirow{3}{*}{8} & 2 & 2 & \multirow{3}{*}{$400 \times 368$} & $308$ & $326$ & 11 & 19 & 0 \\
& 5 & 1 & & $313$ & $323$ & 9 & 49 & 0 \\
& 8 & 0 & & $308$ & $325$ & 12 & 23 & 0 \\
\hline
\multirow{3}{*}{7} & 1 & 2 & \multirow{3}{*}{$400 \times 376$} & $292$ & $331$ & 18 & 33 & 4 \\
& 4 & 1 & & $293$ & $327$ & 15 & 46 & 3 \\
& 7 & 0 & & $314$ & $332$ & 17 & 39 & 0 \\
\hline
\multirow{3}{*}{6} & 0 & 2 & \multirow{3}{*}{$400 \times 384$} & $297$ & $339$ & 14 & 23 & 3 \\
& 3 & 1 & & $299$ & $338$ & 23 & 46 & 10 \\
& 6 & 0 & & $319$ & $339$ & 19 & 41 & 0 \\
\hline
\multirow{2}{*}{5} & 2 & 1 & \multirow{2}{*}{$400 \times 392$} & 305 & 341 & 19 & 47 & 8 \\
& 5 & 0 & & $329$ & $341$ & 13 & 49 & 0 \\
\hline
\multirow{2}{*}{4} & 1 & 1 & \multirow{2}{*}{$400 \times 400$} & $313$ & 352 & 23 & 49 & 4 \\
& 4 & 0 & & $312$ & 351 & 21 & 48 & 5 \\
\hline
\multirow{2}{*}{3} & 0 & 1& \multirow{2}{*}{$400 \times 408$} & $327$ & $357$ & 20 & 46 & 1 \\
& 3 & 0 & & $322$ & $354$ & 21 & 48 & 5 \\
\hline
2 & 2 & 0 & $400 \times 416$ & $330$ & $354$ & 17 & 49 & 2 \\
\hline
1 & 1 & 0 & $400 \times 424$ & $332$ & $356$ & 16 & 49 & 2 \\
\hline
0 & 0 & 0 & $400 \times 432$ & $336$ & $358$ & 15 & 50 & 1 \\
\bottomrule
\end{tabular}
\end{table}

\subsection*{Experiment 5: $\mathbf{T_{234}, r:=\tilde{r}=20}$}
\Cref{tab:results_gen_CS_234} shows the results for $T_{234}$, rank 20. The parameters $s$ and $t$ for which solutions are found are similar to the previous experiments but the number of solutions is on average smaller because the problem becomes more difficult as $p$, $n$, and the rank $r$ increase.
Also, fewer different values of the rank of the Jacobian matrix are found. Note that the number of different values of the rank in general increases for moderate values of $r_{\mathrm{CS}}$. Furthermore, including the CS structure again helps to find practical solutions compared to the generic case ($(s,t)=(0,0)$) for different combinations of $s$ and $t$. 

\begin{table}[H]
\centering
\caption[Number of exact (numerical and practical) PDs of rank 20 of $T_{234}$, and the size and different ranks of the Jacobian matrix for different combinations of the CS parameters $s$ and $t$ using the AL method from \cite{vermeylen2023stability} on 50 small random starting points.]{Number of exact (numerical and practical) $\mathrm{PD}_{20}\left(T_{234} \right)$s that we obtained for different combinations of $(s,t)$ using the AL method from \cite{vermeylen2023stability} on 50 random starting points generated as in \eqref{eq:x0}. Also the size and the different ranks of the Jacobian matrix are given.}
\label{tab:results_gen_CS_234}
\begin{tabular}{c | c c | c c c c | c c } 
\multirow{2}{*}{\boldmath $r_{\mathrm{CS}}$} & \multirow{2}{*}{\boldmath ${s}$} & \multirow{2}{*}{\boldmath ${t}$} & \multirow{2}{*}{\boldmath $\mathrm{size}(J_{\mathrm{CS,gen}})$} & \multicolumn{3}{c|}{\boldmath $\mathrm{rank}\left( J_{\mathrm{CS,gen}}(x^*) \right)$} & \multirow{2}{*}{\textbf{\#} \boldmath$x^*$} & \multirow{2}{*}{\textbf{pract.}} \\
& & & & $\min$ & $\max$ & \# & & \\
\midrule
\multirow{1}{*}{10} & 7 & 1 & {$576 \times 440$} & $401$ & 401 & 1 & 1 & 0 \\
\hline
\multirow{1}{*}{9} & 6 & 1 & {$576 \times 448$} & $403$ & 408 & 2 & 2 & 0 \\
\hline
\multirow{2}{*}{8} & 2 & 2 & \multirow{2}{*}{$576 \times 456$} & $409$ & 409 & 1 & 2 & 0 \\
& 5 & 1 & & $408$ & $ 414$ & 5 & 4 & 1 \\
\hline
\multirow{2}{*}{7} & 1 & 2 & \multirow{2}{*}{$576 \times 464$} & $412$ & $415$ & 3 & 5 & 0 \\
& 4 & 1 & & $410$ & $419$ & 6 & 5 & 1 \\
\hline
\multirow{2}{*}{6} & 0 & 2 & \multirow{2}{*}{$576 \times 472$} & $420$ & 420 & 1 & 1 & 0 \\
& 3 & 1 & & $421$ & $ 424$ & 4 & 10 & 0 \\
\hline
\multirow{2}{*}{5} & 2 & 1 & \multirow{2}{*}{$576 \times 480$} & $425$ & 432 & 6 & 7 & 1 \\
& 5 & 0 & & $424$ & $434$ & 3 & 4 & 0 \\
\hline
\multirow{2}{*}{4} & 1 & 1 & \multirow{2}{*}{$576 \times 488$} & $433$ & $438$ & 3 & 6 & 0 \\
& 4 & 0 & & $427$ & $438$ & 3 & 4 & 1 \\
\hline
\multirow{2}{*}{3} & 0 & 1 & \multirow{2}{*}{$576 \times 496$} & $441$ & $443$ & 2 & 4 & 0 \\
& 3 & 0 & & $438$ & $447$ & 7 & 17 & 0 \\
\hline
\multirow{1}{*}{2} & 2 & 0 & $576 \times 504$ & $441$ & $447$ & 6 & 15 & 0 \\
\hline
\multirow{1}{*}{1} & 1 & 0 & $576 \times 512$ & $442$ & $448$ & 7 & 18 & 0 \\
\hline
\multirow{1}{*}{0} & 0 & 0 & $576 \times 520$ & $444$ & $448$ & 4 & 21 & 0 \\
\bottomrule
\end{tabular}
\end{table}

\subsection*{Experiment 6: $\mathbf{T_{245}, r:=\tilde{r}=33}$}
Lastly, \Cref{tab:results_gen_CS_245} shows the results for $T_{245}$ and rank 33. Again the problem has become more difficult because $p$, $n$, and $r$ have increased. 
Still, more numerical solutions are found for several CS parameters compared to the generic case ($(s,t) = (0,0)$). 

\begin{table}[H]
\centering
\caption[Number of exact (numerical and practical) PDs of rank 33 of $T_{245}$, and the size and different ranks of the Jacobian matrix for different combinations of the CS parameters $s$ and $t$ using the AL method from \cite{vermeylen2023stability} on 50 small random starting points.]{Number of exact (numerical and practical) $\mathrm{PD}_{33}\left(T_{245} \right)$s that are obtained for different combinations of $s$ and $t$ using the AL method from \cite{vermeylen2023stability} on 50 random starting points generated as in \eqref{eq:x0}. Also the size and the different ranks of the Jacobian matrix are given.}
\label{tab:results_gen_CS_245}
\begin{tabular}{c | c c | c c c c | c c } 
\multirow{2}{*}{\boldmath $r_{\mathrm{CS}}$} & \multirow{2}{*}{\boldmath ${s}$} & \multirow{2}{*}{\boldmath ${t}$} & \multirow{2}{*}{\boldmath $\mathrm{size}(J_{\mathrm{CS,gen}})$} & \multicolumn{3}{c|}{\boldmath $\mathrm{rank}\left( J_{\mathrm{CS,gen}}(x^*) \right)$} & \multirow{2}{*}{\textbf{\#} \boldmath$x^*$} & \multirow{2}{*}{\textbf{pract.}} \\
& & & & $\min$ & $\max$ & \# & & \\
\midrule
\multirow{1}{*}{10} & 4 & 2 &\multirow{1}{*}{$1600 \times 1174$} & $1091$ & $1091$ & 1 & 1 & 0  \\
\hline
\multirow{3}{*}{9} & 3 & 2 & \multirow{3}{*}{$1600 \times 1182$} & $1088$ & $1096$ & 2 & 2 & 0  \\
& 6 & 1 & & $1089$ & 1099 & 3 & 5 & 0 \\
& 9 & 0 & & $1096$ & $1096$ & 1 & 1 & 0 \\
\hline
\multirow{2}{*}{8} & 5 &1 & \multirow{2}{*}{$1600 \times 1190$} & 1096 & 1105 & 3 & 3 & 0 \\
& 8 & 0 & & $1106$ & $1106$ & 1 & 1 & 0  \\
\hline
\multirow{3}{*}{7} & 1 & 2 & \multirow{3}{*}{$1600 \times 1198$} & $1094$ & $1098$ & 2 & 2 & 0  \\
& 4 & 1 & & $1102$ & $1104$ & 2 & 3 & 0 \\
& 7 & 0 & & $1105$ & 1105 & 1 & 1 & 0 \\
\hline
\multirow{2}{*}{6} & 3 & 1 & \multirow{2}{*}{$1600 \times 1206$} & $1102$ & 1116 & 5 & 5 & 0 \\
& 6 & 0 & & $1115$ & $1115$ & 1 & 2 & 0 \\
\hline
\multirow{2}{*}{5} & 2 & 1 & \multirow{2}{*}{$1600 \times 1214$} & 1118 & 1122 & 3 & 3 & 0 \\
& 5 & 0 & & $1118$ & $1123$ & 4 & 4 & 0 \\
\hline
\multirow{2}{*}{4} & 1 & 1 & \multirow{2}{*}{$1600 \times 1222$} & $1115$ & $1120$ & 4 & 4 & 0 \\
& 4 & 0 & & $1118$ & $1121$ & 3 & 3 & 0 \\
\hline
\multirow{2}{*}{3} & 0 & 1 & \multirow{2}{*}{$1600 \times 1230$} & $1123$ & $1127$ & 3 & 5 & 0  \\
& 3 & 0 & & 1119 & 1131 & 5 & 7 & 0\\
\hline
2 & 2 & 0 & $1600 \times 1238$ & $1123$ & $1128$ & 4 & 7 & 0 \\
\hline
1 & 1 & 0 & $1600 \times 1246$ & $1125$ & $1134$ & 3 & 6 & 0 \\
\hline
0 & 0 & 0 & $1600 \times 1254$ & $1127$ & $1130$ & 3 & 4 & 0 \\
\bottomrule
\end{tabular} 
\end{table}

\setcounter{MaxMatrixCols}{30}

\subsection*{Remark: $\mathbf{T_{333}, r:=\tilde{r}=23}$}
Note that the CS rank can also be chosen smaller than $r$ for $m=p=n$. For example, 11 symmetric rank-1 tensors can be enforced in a $\mathrm{PD}_{23}\left(T_{333} \right)$. The other 12 rank-1 tensors do not have to admit a structure in this case. One of the practical PDs that are found for these parameters has as symmetric part:
\begin{equation*}
\begin{split}
A &= \begin{bNiceMatrix}[r,columns-width=0.4cm]
 1 & 0 & 0 & 0 & 0 & 0 & 0 & 0 & 0 & 0 & 0\\
 0 & 0 & 0 & 1 & 0 & 0 & 0 & 0 &-1 & 0 & 0\\
 0 &-1 &-1 & 0 & 1 & 0 & 0 & 0 & 0 & 0 & 1\\
 0 & 0 &-1 & 0 & 0 & 1 &-1 &-1 & 1 & 0 & 1\\
 0 & 0 & 0 & 1 & 0 & 0 & 1 & 0 &-1 & 0 & 0\\
 0 &-1 &-1 & 0 & 1 & 1 & 0 &-1 & 0 & 0 & 1\\
 0 & 0 & 0 & 0 & 0 & 0 & 0 & 1 & 0 & 0 &-1\\
 0 & 0 & 1 & 0 &-1 &-1 & 0 & 0 & 0 & 1 & 0\\
 0 & 1 & 1 & 0 &-1 &-1 & 0 & 1 & 0 & 1 &-1\\
\end{bNiceMatrix},
\end{split}
\end{equation*}
and as asymmetric part:
\begin{align*}
\dot{U} &= \begin{bNiceMatrix}[r,columns-width=0.4cm]
-1 & 0 & 0 & 0 & 0 &-1 & 0 & 0 & 0 & 0 & 0 & 0\\
 1 & 0 & 0 & 1 & 0 & 0 &-1 & 0 &-1 & 0 & 0 & 0\\
 0 & 0 & 0 & 1 & 0 &-1 & 0 & 0 &-1 &-1 & 0 & 0\\
-1 & 0 & 0 & 0 & 0 &-1 & 0 & 0 & 0 & 0 &-1 & 1\\
 1 & 0 & 0 & 1 & 0 & 0 & 0 & 0 & 0 & 0 & 0 & 0\\
 0 & 0 & 0 & 1 & 0 &-1 & 0 & 1 & 0 & 0 &-1 & 0\\
 1 & 0 & 1 & 0 & 1 & 1 & 0 & 0 & 0 & 0 & 0 & 0\\
-1 & 1 &-1 &-1 & 0 & 0 & 0 & 0 & 0 & 0 & 0 & 0\\
 0 & 0 & 0 &-1 & 0 & 1 & 0 & 0 & 0 & 0 & 0 & 0\\
\end{bNiceMatrix}, \\
\dot{V} &= \begin{bNiceMatrix}[r,columns-width=0.4cm]
 0 & 0 & 0 & 0 & 0 & 0 &-1 & 0 & 0 & 1 & 0 & 0\\
 0 & 0 & 0 & 0 & 0 & 0 & 1 & 1 & 0 & 0 &-1 & 1\\
 0 & 0 & 0 & 0 & 1 & 0 & 0 & 1 & 0 & 1 &-1 & 0\\
 1 & 0 &-1 & 0 & 0 & 0 &-1 & 0 & 0 & 1 & 0 & 0\\
 0 & 0 & 0 & 0 & 0 & 0 & 1 & 1 & 0 & 0 & 0 & 0\\
 0 & 1 &-1 & 0 & 0 & 0 & 0 & 1 & 0 & 1 & 0 & 0\\
 0 & 0 & 0 & 0 & 0 &-1 & 1 & 0 & 1 &-1 & 0 & 0\\
 0 & 0 & 0 &-1 & 0 & 0 &-1 &-1 &-1 & 0 & 0 & 0\\
 0 & 0 & 0 & 0 & 0 & 0 & 0 &-1 & 0 &-1 & 0 & 0\\
\end{bNiceMatrix},  
\end{align*}
\begin{align*}
\dot{W} &= \begin{bNiceMatrix}[r,columns-width=0.4cm]
 0 & 0 & 0 & 0 & 1 & 0 & 0 & 0 & 0 & 0 & 0 & 1\\
-1 & 1 &-1 & 0 & 0 & 0 & 0 & 0 & 0 & 0 & 0 &-1\\
 0 & 1 &-1 & 0 & 1 & 1 & 0 & 0 & 0 & 0 & 0 & 0\\
 0 & 0 & 0 & 0 & 1 & 0 & 1 & 0 &-1 & 0 & 0 & 1\\
 0 & 1 & 0 & 0 & 0 & 0 & 0 & 0 & 0 & 0 & 0 &-1\\
 0 & 1 & 0 &-1 & 1 & 0 & 0 & 0 &-1 & 0 & 0 & 0\\
 0 & 0 & 0 & 0 &-1 & 0 & 0 & 0 & 0 &-1 & 1 &-1\\
 0 &-1 & 0 & 0 & 0 & 0 & 0 & 1 & 0 & 0 &-1 & 1\\
 0 &-1 & 0 & 0 &-1 & 0 & 0 & 0 & 0 & 0 & 0 & 0\\
\end{bNiceMatrix}. 
\end{align*}


\section{Conclusion}
The proposed generalized cyclic symmetric structure can be useful to include in the optimization problem for the \emph{fast matrix multiplication (FMM)} problem to reduce the number of parameters and to improve the convergence. More numerically exact and practical \emph{polyadic decompositions (PDs)} of \emph{matrix multiplication tensors (MMTs)} can be obtained with this structure. Several new \emph{practical} PDs, i.e., sparse PDs with elements that are either one, minus one, or zero, with this new structure are discovered for various problem parameters, i.e., sizes of the MMT. 

\section{Acknowledgement}

This work was funded by (1) the Flemish Government: this research received funding under the AI Research Program. Charlotte Vermeylen is affiliated to Leuven.AI - KU Leuven institute for AI, B-3000, Leuven, Belgium. (2) KU Leuven Internal Funds: iBOF/23/064, C14/22/096 and
IDN/19/014.
Marc Van Barel was supported by the Research Council KU Leuven, C1-project C14/17/073 and by the Fund for Scientific Research–Flanders (Belgium), EOS Project no 30468160 and project G0B0123N.

\section{Competing interests}

Declarations of interest: none.


\bibliographystyle{elsarticle-num} 
\bibliography{allpapers}


%
%
%

\end{document}